\newtheorem{theorem}{Theorem}[section]
\newtheorem{proposition}[theorem]{Proposition}
\newtheorem{corollary}[theorem]{Corollary}
\newtheorem{lemma}[theorem]{Lemma}
\theoremstyle{definition}
\newtheorem{definition}[theorem]{Definition}
\newtheorem{remark}[theorem]{Remark}
\numberwithin{equation}{section}
\newcommand{\pr}{\partial}
\renewcommand\div{\operatorname{div}}
\newcommand{\diam}{\operatorname{diam}}
\newcommand{\curv}{\kappa}
\DeclareMathOperator{\sn}{sn}
\DeclareMathOperator{\cs}{cs}
\DeclareMathOperator{\ct}{ct}
\DeclareMathOperator{\tn}{tn}
\begin{document}

\title[Moving monotonicity formulae in constant curvature]{Moving monotonicity formulae for minimal submanifolds in constant curvature}
\author{Keaton Naff}
\address{Department of Mathematics, Massachusetts Institute of Technology, Cambridge, MA 02139, USA}
\email{kn2402@mit.edu}

\author{Jonathan J. Zhu}
\address{Department of Mathematics, University of Washington, Seattle, WA, USA}
\email{jonozhu@uw.edu}

\begin{abstract}
We discover new monotonicity formulae for minimal submanifolds in space forms, which imply the sharp area bound for minimal submanifolds through a prescribed point in a geodesic ball. These monotonicity formulae involve an energy-like integral over sets which are, in general, not geodesic balls. In the Euclidean case, these sets reduce to the moving-centre balls introduced by the second author in \cite{Zhu18}. 
\end{abstract}
\maketitle

\section{Introduction}
 
Recently, in \cite{NZ}, we studied an area estimate for minimal submanifolds in geodesic balls in space forms which pass through a prescribed point. Such area estimates sometimes follow from a suitable monotonicity formula; monotonicity formulae naturally encapsulate strictly more information and are often used to give more precise control of geometric quantities. In the following brief note, we will exhibit some new monotonicity formulae for minimal submanifolds in space forms. These formulae can be used to recover the sharp area estimates proved in \cite{NZ} and \cite{BH17}.

Consider a space form $M \in \{\mathbb{H}^n, \mathbb{R}^n, \mathbb{S}^n\}$, and a geodesic ball $B^n_R$ in $M$ with radius $R \in (0, \frac{1}{2} \mathrm{diam}(M))$ and centre $o$. Define $A(r) := \int_0^r \sn(t)^{k-1} dt$, where $g=dr^2 + \sn(r)^2 g_{\mathbb{S}^{n-1}}$ and $\sn(r)$ is the usual warping function. Given a point $y$ in $B^n_R$, let $\gamma \subset M$ be the maximal geodesic containing $o$ and $y$. There is a foliation of $B^n_R$ by totally geodesic $(n-1)$-dimensional disks $\Gamma_s \subset B^n_R$, $s \in (-R, R)$, which meet $\gamma$ orthogonally.\footnote{In $\mathbb{R}^n$, for instance, these are just the intersection of hyperplanes orthogonal to $y$ with $B^n_R$.} Let $r_y$ be the distance function from the point $y$. There is a unique function $u_s$ on $B^n_R$ which agrees with $r_y$ on $\pr B^n_R$ and such that $u_s = u(s)$ is constant on each $\Gamma_s$ (see Sections 2.4 and 3.2 in \cite{NZ} for more details and Definition \ref{eq:def-u} below).

\begin{theorem}
\label{thm:main}
Suppose $M \in \{\mathbb{H}^n, \mathbb{R}^n, \mathbb{S}^n\}$ is a space form, $R \in (0, \frac{1}{2} \mathrm{diam}(M))$, and $y \in B^n_R$. If $M= \mathbb{S}^n$, further assume that $\cos(R + d(0,y))^2 \geq \frac{2}{k}$. Suppose that $\Sigma$ is a $k$-dimensional minimal submanifold in $B^n_R$ with $\pr\Sigma \subset \pr B^n_R$. 

Define $f:= \frac{A(r_y)}{A(u_s)}$ and let $E_t := \{f\leq t\}$. Let $\Sigma_0$ be any totally geodesic $k$-disk orthogonal to the geodesic $\gamma$ containing $o$ and $y$. There is a continuous family of functions $G_t\geq 0$ on $B^n_R$, with $G_0=0$, so that the quantity 
\[ Q(t) := \frac{\int_{\Sigma \cap E_t} \left( |\nabla^\top r_y|^2 - G_t  |\nabla^\top u|^2 \right) }{|\Sigma_0\cap E_t|}\] 
is monotone increasing for $t\in[0,1]$. Moreover, $Q$ is constant if and only if $\Sigma$ is a totally geodesic $k$-disk orthogonal to $\gamma$. 
\end{theorem}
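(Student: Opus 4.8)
The plan is to differentiate $Q$ in $t$ and show $Q'\ge 0$, by combining the coarea formula for the motion of the sublevel sets $E_t$ with the first variation (divergence) formula for the minimal submanifold $\Sigma$; the rigidity statement will come from the equality case of one pointwise inequality. First I would record the reductions. Since $A$ is strictly increasing with $A(0)=0$, we have $E_0=\{y\}$, while $f<1$ in the interior of $B^n_R$ and $f\equiv 1$ on $\partial B^n_R$, so $E_1=\overline{B^n_R}$ and $\Sigma\cap E_1=\Sigma$; for $t\in(0,1)$ the level $\{f=t\}$ is regular and $\partial(\Sigma\cap E_t)=\Sigma\cap\{f=t\}$. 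Because $\Sigma_0$ is totally geodesic, meeting $\gamma$ orthogonally, $r_y|_{\Sigma_0}$ depends only on the distance to the foot of $y$ on $\gamma$, so $\Sigma_0\cap E_t$ is a geodesic $k$-ball and $D(t):=|\Sigma_0\cap E_t|=\omega_{k-1}A(\rho_0(t))$ is an explicit function of $t$; in the model case when $\Sigma_0$ passes through $y$ one has $|\nabla^\top r_y|\equiv 1$ and $\nabla^\top u\equiv 0$ on $\Sigma_0$, giving $D(t)=\omega_{k-1}A(u(s(y)))\,t$. Writing $N(t)$ for the numerator, the coarea formula gives $N'(t)=\int_{\Sigma\cap\{f=t\}}\frac{|\nabla^\top r_y|^2-G_t|\nabla^\top u|^2}{|\nabla^\top f|}\,d\mathcal H^{k-1}-\int_{\Sigma\cap E_t}(\partial_t G_t)\,|\nabla^\top u|^2$, so $Q'\ge 0$ is equivalent to $N'(t)D(t)\ge N(t)D'(t)$.

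The engine is a pair of exact divergence identities for a $k$-dimensional minimal submanifold in the space form of curvature $\kappa$. From the Hessian comparison $\Hess r_y=\ct(r_y)\,(g-dr_y\otimes dr_y)$, minimality, and $A''=(k-1)\ct A'$, one computes, with $\Psi:=A/A'$,
\[ \div_\Sigma\!\bigl(\Psi(r_y)\nabla^\top r_y\bigr)=|\nabla^\top r_y|^2+k\,\ct(r_y)\Psi(r_y)\,|\nabla^\perp r_y|^2 \]
(in $\mathbb R^n$ this is just $\div_\Sigma(\tfrac{r_y}{k}\nabla^\top r_y)=1$), together with a companion identity $\div_\Sigma(\Theta(u)\nabla^\top u)=(\mathrm{explicit})\,|\nabla^\top u|^2+(\mathrm{explicit})$, whose derivation rests on the fact that the leaves $\Gamma_s$ are totally geodesic: this pins down $\Hess u$ up to curvature terms (as in Sections 2.4, 3.2 of \cite{NZ}) and annihilates the part of $\Hess u$ transverse to $\nabla u$. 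The family $G_t$ is then chosen so that, along $\{f=t\}$ — where $A(r_y)=tA(u_s)$ and $\nabla^\top f=\tfrac{1}{A(u_s)}(A'(r_y)\nabla^\top r_y-tA'(u_s)\nabla^\top u)$ — the off-diagonal term $\langle\nabla^\top r_y,\nabla^\top u\rangle$ appearing in the boundary integrand $\tfrac{|\nabla^\top r_y|^2-G_t|\nabla^\top u|^2}{|\nabla^\top f|}$ is cancelled, or absorbed into the manifestly nonnegative term above; such a choice can be taken continuous in $t$, nonnegative, and equal to $0$ at $t=0$.

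Substituting the two divergence identities converts $\int_{\Sigma\cap E_t}|\nabla^\top r_y|^2$ and $\int_{\Sigma\cap E_t}G_t|\nabla^\top u|^2$ into boundary flux integrals of $\langle\nabla^\top r_y,\eta\rangle$ and $\langle\nabla^\top u,\eta\rangle$ over $\Sigma\cap\{f=t\}$ with explicit scalar weights (a routine $\varepsilon$-exhaustion near $y$ justifies the divergence theorem). After the cancellation, $N'(t)D(t)-N(t)D'(t)$ reduces to the sum of $\int_{\Sigma\cap E_t}k\,\ct(r_y)\Psi(r_y)|\nabla^\perp r_y|^2\ge 0$ — nonnegative because $r_y<\tfrac\pi2$ on $B^n_R$ — and an integral over $\Sigma\cap\{f=t\}$ of a pointwise expression in $r_y$, $u_s$, $|\nabla^\top r_y|$, $|\nabla^\top u|$ which, after inserting $A(r_y)=tA(u_s)$ and using the comparison geometry of the foliation, is pointwise nonnegative. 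Its Euclidean shadow is precisely the classical monotonicity defect $\tfrac{1-|\nabla^\top r_y|^2}{|\nabla^\top r_y|}=\tfrac{|\nabla^\perp r_y|^2}{|\nabla^\top r_y|}\ge 0$, the same mechanism as the moving-centre monotonicity of \cite{Zhu18}. This gives $Q'\ge 0$.

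For the rigidity, $Q'\equiv 0$ forces $\nabla^\perp r_y\equiv 0$ on $\Sigma$ (so $\Sigma$ is ruled by geodesics emanating from $y$), and equality in the boundary inequality forces in addition $\nabla^\top u\equiv 0$ and $|\nabla^\top r_y|\equiv 1$ along $\Sigma$; a smooth minimal geodesic cone with vertex $y$ having these properties and with $\partial\Sigma\subset\partial B^n_R$ must be a totally geodesic $k$-disk orthogonal to $\gamma$, and conversely any such disk makes $Q$ constant. The step I expect to be the main obstacle is the companion identity for $u$ together with the closing pointwise inequality on $\{f=t\}$: because the $E_t$ are genuinely not geodesic balls, one must exploit the geometry of the totally geodesic foliation $\Gamma_s$ and the fact that $u_s$ propagates the boundary values of $r_y$ along it. The sphere is the delicate case, where several curvature terms that vanish in $\mathbb H^n$ and $\mathbb R^n$ must be kept signed; this is exactly where $\cos^2(R+d(o,y))\ge\tfrac2k$ is used, since it keeps $r_y$ bounded away from $\tfrac\pi2$ on $B^n_R$ and gives the correct sign of the relevant combination of Hessian terms (amounting to a bound $k\cos^2 r_y\ge 2$ on $B^n_R$).
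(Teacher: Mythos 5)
Your overall plan --- differentiate $Q$, apply the coarea formula, convert interior integrals to boundary fluxes via the divergence theorem and minimality, and then argue pointwise nonnegativity of what remains --- has the right shape, and your Hessian identity for $r_y$ and its consequence for $\div_\Sigma\bigl(\tfrac{A(r_y)}{A'(r_y)}\nabla^\top r_y\bigr)$ are both correct. But several of the claims that you flagged as the hard steps, and several claims you did not flag, do not hold up against the actual mechanism.

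First, and most importantly, you have not engaged with the fact that the weight must depend on $t$. Your own coarea computation produces an extra interior term $-\int_{\Sigma\cap E_t}(\partial_t G_t)|\nabla^\top u|^2$, and closing the argument requires this term to be played against the divergence of the vector field: the correct closing inequality is $\div_\Sigma W_t \ge w_t - t\,\partial_t w_t$ on $\Sigma\cap E_t$ (together with a separate boundary inequality). You never say how $\partial_t G_t$ is controlled, and it cannot simply be discarded; in the paper's proof it is precisely the identity
\[\frac{\partial_s\tilde F_t}{\tilde F_t}+\Bigl(1-t\,\frac{\partial_t\tilde F_t}{\tilde F_t}\Bigr)\frac{A'(u_s)u_s'}{A(u_s)}=\frac{F'(s)}{F(s)}\le 0\]
that makes everything collapse. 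This identity, and the fact that it reduces the entire divergence condition to $F'(s)\ge 0$, is the real content of the theorem and is absent from your sketch.

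Second, your ``companion identity $\div_\Sigma(\Theta(u)\nabla^\top u)=(\text{explicit})$'' would require computing $\Hess u$ (or $\Delta_\Sigma s$) directly, which is considerably messier than what the paper does: the paper writes the second piece of its test vector field as a multiple of the Killing field $\partial_s=\cs(\rho)^2\nabla s$, so that $\div_\Sigma(\tilde F_t(s)\partial_s)=\partial_s\tilde F_t\cdot\cs(\rho)^2|\nabla^\top s|^2$ by antisymmetry of $\nabla\partial_s$, with no Hessian-of-$u$ terms at all. Your route is plausible but harder, and you have not carried it out.

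Third, your account of where $\cos^2(R+d(o,y))\ge 2/k$ enters is off. The mechanism is not a bound ``$k\cos^2 r_y\ge 2$ on $B^n_R$''; it is $F'(s)\ge 0$ for $|s|\le R$, which is equivalent to $k\cs(u(s))^2\ge 2$ --- a constraint on the foliation function $u(s)$, not on the variable distance $r_y$. Since $u(s)$ ranges up to $R+s_y$ on the ball, this is exactly the hypothesis in the theorem.

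Finally, your claim that the Euclidean ``shadow'' of the defect is $\tfrac{|\nabla^\perp r_y|^2}{|\nabla^\top r_y|}$, ``the same mechanism as \cite{Zhu18},'' misidentifies the source of positivity for the quantity actually stated in the theorem. For that quantity (the $i=0,\ j=1$ case) the boundary defect $w_t-\langle W_t,\tfrac{\nabla^\top f}{f}\rangle$ vanishes identically on $\{f=t\}$; the monotonicity comes entirely from the interior term $a(r_y)|\nabla^\perp r_y|^2$ plus the $F'/F$ contribution above. The $\tfrac{|\nabla^\perp r_y|^2}{|\nabla^\top r_y|}$ boundary term is a feature of the unweighted moving-centre monotonicity ($i=1$), which does not generalise beyond $\mathbb R^n$ and $\mathbb H^n$ --- indeed, the paper's point is that for $\mathbb S^n$ (and even to get a clean statement in $\mathbb H^n$) one is forced into the $t$-dependent weight. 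Your rigidity discussion is fine (the conditions $\nabla^\top u\equiv 0$ and $\nabla^\perp r_y\equiv 0$ are equivalent to the paper's $\nabla^\top s\equiv 0$, $|\nabla^\top r_y|\equiv 1$), but as it rests on the boundary-defect mechanism it would need to be rerouted through the actual interior defect.
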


For a more precise statement, the reader may consult Theorem \ref{thm:weighted-monotonicity}, which also includes certain other monotonicity formulae for $M\in \{\mathbb{H}^n,\mathbb{R}^n\}$ (see also Remark \ref{rmk:F'}). One of these is equivalent to the moving-centre monotonicity formula previously found by the second author \cite{Zhu18}. This latter monotonicity formula is a proper (unweighted) area monotonicity, whereas all of the new monotonicity formulae in this paper require a weight similar to the one in Theorem \ref{thm:main}. We note that even in the classical setting, $y = o$, it is not known whether an area-monotonicity holds in the sphere. Nevertheless, there is a very natural weighted monotonicity (Theorem \ref{thm:classic-weighted-monotonicity}) that holds for every space form. Theorem \ref{thm:main} can be thought of as a generalisation of this classical weighted monotonicity. There is also a related boundary monotonicity that holds in the classical setting (Theorem \ref{thm:bdry-monotonicity}), but we do not know if an analogue holds here. We remark that there seem to be fewer settings in extrinsic geometry where monotonicity formulae are known to hold, compared with the intrinsic setting (where, for instance, versions of the Bishop-Gromov monotonicity are known to hold under a variety of general settings).

In all of our monotonicity formulae, the integrand in the numerator is always bounded above by 1, and converges to 1 as $f\to 0$. For small $t$ the sublevel sets $E_t$ approximate geodesic balls around $y$. When $M \in \{\mathbb{H}^n, \mathbb{R}^n\}$, we have $E_1 = B^n_R$. Thus, one may deduce the sharp area bound $|\Sigma| \geq |\Sigma_0 \cap B^n_R|$ if $\Sigma$ passes through the prescribed point $y$ (see Corollary \ref{cor:area-estimate}). 

The sets $E_t$ behave slightly differently if $M = \mathbb{S}^n$ and in general we only have $E_1 = B^n_R \cap B^n_{\frac{\pi}{2}}(y)$. The domain $B^n_R \cap B^n_{\frac{\pi}{2}}(y)$ also appeared as an obstruction to the vector field approach used in \cite{NZ} to prove the sharp area estimate in $\mathbb{S}^n$ for certain values of $R$ and $y$ (see Section 5 there for more discussion). In our main theorem, we are only able to establish the monotonicity of $Q(t)$  if $\cos(R + d(o, y))^2\geq\frac{2}{k}$ and this implies that $B^n_R \subset B^n_{\frac{\pi}{2}}(y)$, hence $E_1 = B^n_R$. Note that this condition means the monotonicity does not hold for any $R$ and $y$ if $k = 2$. When the monotonicity does hold, we can also deduce the sharp area estimate in the sphere. We note that the direct vector field method in \cite{NZ} gave the sharp area estimate under somewhat more general conditions on $R,y$.

The proof of Theorem \ref{thm:main} originates from ideas from \cite{Zhu18}, but with two key conceptual realisations: that the moving-centre balls should be replaced by a suitable family of sublevel sets, and that the area should be replaced by a suitable weight. These developments give rise to several conditions that needs to be delicately balanced against each other. First, we extend the notion of moving-centre balls to the sublevel sets of $f = \frac{A(r_y)}{A(u_s)}$. This is motivated by the vector field used to solve the prescribed point problem in \cite{NZ} as well as the fact that it agrees with the known case in \cite{Zhu18}. Partly motivated by the classical weighted monotonicity which holds in all space forms, we also introduce a weight $w_t$, which we additionally allow to depend on $t$. A monotonicity for the energy-like quantity $\int_{\Sigma\cap E_t} w_t$ follows so long as one can find a family of vector fields $W_t$ satisfying:
\begin{enumerate}
\item (boundary condition) $\langle W_t, \frac{\nabla^\top f}{f} \rangle \leq w_t$ on $\Sigma\cap \pr E_t$
\item (divergence condition) $\mathrm{div}_{\Sigma}(W_t) \geq w_t - t \partial_t w_t$ on $\Sigma\cap E_t$. 
\end{enumerate}
(See Lemma \ref{lem:weighted-monotonicity}). In order to deduce the desired sharp area estimates from this monotonicity, the weight $w_t$ and the sets $E_t$ must satisfy a number of additional constraints. Most importantly, we should have that:
\begin{itemize}
\item $w_t\leq 1$ (with equality on the totally geodesic disks orthogonal to $\gamma$);
\item $w_t \to 1$ as $t\to 0$;
\item $E_1 = B^n_R$;
\item $E_t$ approximates balls about $y$ as $t\to 0$. 
\end{itemize}

One of the main difficulties in discovering a suitable monotonicity formula is to \textit{simultaneously} produce the three families $E_t, w_t, W_t$, which satisfy the interdependent conditions above.

\subsection*{Acknowledgements}

KN was supported by the National Science Foundation under grant DMS-2103265. 

\section{Preliminaries}

In this section, we will very briefly review some preliminaries necessary for the proof of the new monotonicity formulae. The reader can more details in \cite{NZ}. 

In what follows, $M \in \{\mathbb{H}^n, \mathbb{R}^n, \mathbb{S}^n\}$ is one of the space forms. We let $B_t^n = B_t^n(o)$ denote the geodesic ball of radius $t$ around a fixed point $o \in M$, which we call the origin. We fix some $R \in (0, \frac{1}{2} \mathrm{diam}(M))$ and some $y \in B^n_R$, and let $\Sigma \subset B_R^n$ denote a $k$-dimensional (smooth) minimal submanifold which passes through $y$ and satisfies $\partial \Sigma \subset \partial B_R^n$. Define 
\begin{equation}\label{def:sn}
\sn(r) = \begin{cases} \sinh(r),& M= \mathbb{H}^n\\  r,& M= \mathbb{R}^n\\ \sin(r),& M= \mathbb{S}^n\end{cases}.
\end{equation}
 and set $\cs(r) := \sn'(r)$, as well as $\tn(r) := \sn(r)/\cs(r)$ and $\ct(r) =1/\tn(r)$.

Given $z \in M$, we introduce the shorthand  $r_z(x) = d(x, z)$ for the distance function on $M$. Away from $z$ and the cut locus of $z$ the function $r_z$ is smooth. For our origin $o \in M$, we will write $r(x)$ in place of $r_o(x)$. 

As in \cite{NZ} we define a radius $\underline{r}(y)$ by 
\begin{equation}\label{underline-r}
\underline{r}(y) := \begin{cases} \cs^{-1}\big(\frac{\cs(R)}{\cs(r(y))}\big) ,& M \in \{\mathbb{H}^n, \mathbb{S}^n\} \\ (R^2 - r(y)^2)^\frac{1}{2} ,& M = \mathbb{R}^n \end{cases}.
\end{equation}
If $r(y)=0$, we understand that $\underline{r}(y)=R$. 

For $r \in [0, \frac{1}{2}\mathrm{diam}(M))$, we recall from the introduction the definition 
\begin{equation}
A(r) := \int_0^r \mathrm{sn}(t)^{k-1} \, dt. 
\end{equation}
Observe that $A(r)$ is positive and increasing. Moreover, \begin{equation}\frac{A''(r)}{A'(r)} = (k-1)\ct(r).\end{equation} The area of a $k$-dimensional totally geodesic disk in the space form $M$ is 
\begin{equation}
|B^k_r| = A(r) |\mathbb{S}^{k-1}|. 
\end{equation}
Finally, we note that as $r \to 0$, these functions have the asymptotics
\begin{equation}\label{asymptotics-1}
A'(r) = r^{k-1} + o(r^{k-1}), \qquad A(r) = \frac{1}{k} r^k + o(r^{k}).
\end{equation}

\subsection{The functions $s$ and $\rho$}
As in the introduction, let $\gamma \subset M$ be the unique maximal geodesic containing the points $o$ and $y$. Let $\rho(x) :=\inf_{z \in \gamma} d(x, z)$ denote the distance to the geodesic $\gamma$. For each point $x$ with $\rho(x)<\frac{1}{2}\diam(M)$, there exists a unique point $z_x \in \gamma$ such that $\rho(x) = d(x, z_x)$. Note that $\diam(M)<\infty$ only when $M=\mathbb{S}^n$. In this case $\{\rho(x) = \frac{1}{2}\diam(M)\}$ consists of a copy of $\mathbb{S}^{n-2}$ and we let $o'$ denote the antipodal point to $o$. For $M = \mathbb{S}^n$, we set $\mathcal{E} = \{\rho = \frac{1}{2} \mathrm{diam}(M)\} \cup \{x \in M : \rho(x) < \frac{1}{2} \mathrm{diam}(M) \text{ and } z_x = o'\}$.  Otherwise, we take $\mathcal{E}=\emptyset$. Note in particular that $B^n_R \subset M\setminus \mathcal{E}$ whenever $R< \frac{1}{2}\diam(M)$. Now, define $\mathrm{sign} :  \gamma \setminus \mathcal{E} \to \{-1, 0, 1\}$ by $\mathrm{sign}(z) = 1$ if $z \in \gamma\setminus \mathcal{E}$ lies on the same side of $o$ as $y$, $\mathrm{sign}(o) = 0$, and $\mathrm{sign}(z) = -1$ otherwise.  

We define a function $s: M \setminus \mathcal{E} \to (-\diam(M), \diam(M))$ by setting $s(x) = \mathrm{sign}(z)r(z_x)$. In particular, the Pythagorean theorem applied to the right geodesic triangle $oz_x x$ gives
\begin{equation}
\label{eq:pythag-0}
\begin{cases} \cs(s(x))\cs(\rho(x)) = \cs(r(x)) & M \in \{\mathbb{H}^n, \mathbb{S}^n\} \\ s(x)^2 + \rho(x)^2 = r(x)^2 & M = \mathbb{R}^n \end{cases}, 
\end{equation}
for $x \in B^n_R$. 

The function $s$ is smooth on $M\setminus \mathcal{E}$ and the function $\rho$ is a distance function on $M$. The level sets of $s$ are totally geodesic hypersurfaces in $M$; in particular $s$ is constant on any $k$-dimensional totally geodesic disk in $B^n_R$ which intersects $\gamma$ orthogonally. Indeed, the metric on $M\setminus (\mathcal{E}\cup \gamma) \cong (0, \frac{1}{2}\mathrm{diam}(M)) \times (-\diam(M), \diam(M)) \times \mathbb{S}^{n-2}$ may be written \begin{equation}\label{eq:g-s-rho}g= d\rho^2 + \cs(\rho)^2 ds^2 + \sn(\rho)^2 g_{\mathbb{S}^{n-2}}\end{equation} and the coordinate vector field $\pr_s = \frac{\pr}{\pr s}$ (suitably extended) generates an isometry of $M$. We recall from \cite{NZ} that $\pr_s = \cs(\rho)^2\nabla s$ is a Killing field.  In particular, we have $g(\nabla_X \pr_s, X)=0$ for any vector field $X$.

\section{Fixed-centre monotonicity and the area estimate through the origin}

In this section, we revisit classical monotonicity formulae in concentric balls in space forms. We also review their proofs, as some are not explicitly stated in the literature, and they provide motivation for our novel monotonicity formulae. The most classical monotonicity formula is the area monotonicity for minimal submanifolds in $\mathbb{R}^n$. 

In addition to area monotonicity (which also holds in $\mathbb{H}^n$ but not $\mathbb{S}^n$), there is a weighted area monotonicity and a related weighted boundary area monotonicity, both of which hold in \textit{all} of the space forms. Each of these monotone quantities is closely related to the existence of a vector field satisfying a certain divergence lower bound. 

The various monotonicity formula in Theorem \ref{thm:weighted-monotonicity} can be viewed as generalizations of the monotonicity formulae in Theorems \ref{thm:classic-monotonicity} and \ref{thm:classic-weighted-monotonicity}.

To begin, we recall the important vector fields 
\begin{equation}
W_0 := \frac{1}{A'(r)} \nabla r  = \nabla G(r)
\end{equation}
and 
\begin{equation}
W_1 := \frac{A(r)}{A'(r)} \nabla r = A(r) \nabla G(r).
\end{equation}
In the following, for any vector field $W$ we write $W = W^\top + W^\perp$ to denote the tangential and orthogonal components of $W$ along $\Sigma$. Correspondingly, let $\nabla^\top r = (\nabla r)^\top$ and $\nabla^\perp r = (\nabla r)^\perp$. We recall from \cite{NZ}:

\begin{proposition} \label{divergences}
Given a $k$-plane $S \subset T_xM$, where $0 < r(x) < \frac{1}{2}\mathrm{diam}(M)$, we have
\begin{equation}
\mathrm{div}_{S}W_0(x)  = k \frac{1}{A'(r)}\ct(r) |\nabla^\perp r|^2, 
\end{equation}
and 
\begin{equation}
\mathrm{div}_{S}W_1(x) =  |\nabla^\top r|^2 + k\frac{A(r)}{A'(r)} \ct(r)|\nabla^\perp r|^2.
\end{equation}
\end{proposition}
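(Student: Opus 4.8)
The plan is to reduce both identities to the Hessian comparison for the distance function $r=r_o$, which holds with \emph{equality} in constant curvature. I would first record that on $B^n_R\setminus\{o\}$ (and more generally away from $o$ and its cut locus) one has $|\nabla r|\equiv 1$ and
\[ \Hess r = \ct(r)\left(g - dr\otimes dr\right), \]
uniformly in $\mathbb{H}^n$, $\mathbb{R}^n$ and $\mathbb{S}^n$ with the piecewise conventions \eqref{def:sn}; this is immediate from the warped-product form $g = dr^2 + \sn(r)^2 g_{\mathbb{S}^{n-1}}$ (or from a Jacobi field computation). I would also note that $\mathrm{div}_S W = \sum_{i=1}^k\langle \nabla_{e_i}W,e_i\rangle$ is the partial trace over an orthonormal basis $\{e_i\}$ of $S$, so that $\mathrm{div}_S(\nabla\varphi) = \tr_S\Hess\varphi$, that $\tr_S g = k$ and $\tr_S(dr\otimes dr) = |\nabla^\top r|^2$, and that $|\nabla^\perp r|^2 = 1 - |\nabla^\top r|^2$.

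Next I would treat $W_0 = \nabla G(r)$, with $G'(r) = 1/A'(r)$. By the chain rule $\Hess G(r) = G''(r)\,dr\otimes dr + G'(r)\Hess r$, and since $G''= -A''/(A')^2 = -(k-1)\ct(r)/A'$ by the identity $A''/A' = (k-1)\ct(r)$ from the preliminaries, the two tensor pieces combine into
\[ \Hess G(r) = \frac{\ct(r)}{A'(r)}\left(g - k\,dr\otimes dr\right). \]
Taking $\tr_S$ and using $k - k|\nabla^\top r|^2 = k|\nabla^\perp r|^2$ then gives $\mathrm{div}_S W_0 = k\,\tfrac{\ct(r)}{A'(r)}|\nabla^\perp r|^2$, the first formula.

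For the second, I would write $W_1 = A(r)\,W_0$ and use the Leibniz rule $\mathrm{div}_S(fW) = \langle \nabla^\top f, W\rangle + f\,\mathrm{div}_S W$. Since $\nabla A(r) = A'(r)\nabla r$ and $W_0 = \tfrac{1}{A'(r)}\nabla r$, the first term equals $\langle \nabla^\top r, \nabla r\rangle = |\nabla^\top r|^2$, and the second is $A(r)$ times the expression just computed; adding them yields the claimed formula for $\mathrm{div}_S W_1$. (One could instead expand $\mathrm{div}_S(\psi(r)\nabla r)$ directly with $\psi = A/A'$ and $\psi' = 1 - (k-1)\psi\,\ct(r)$, but this is messier.) I do not expect a real obstacle: the whole proof is a short tensorial computation, and the only points deserving care are that the Hessian comparison is an exact equality in constant curvature and holds uniformly over the three models under the stated warping conventions, and that $\mathrm{div}_S$ consistently denotes the partial trace over the $k$-plane $S$ rather than the ambient divergence on $M$.
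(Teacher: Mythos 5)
Your proof is correct. In this paper Proposition \ref{divergences} is simply recalled from the companion paper \cite{NZ} (``We recall from \cite{NZ}:'') and is not re-derived, so there is no in-paper proof to compare against directly. Your argument is the standard self-contained derivation: write $W_0 = \nabla G(r)$, use the exact Hessian comparison $\Hess r = \ct(r)(g - dr\otimes dr)$ valid in constant curvature, combine with $G'' = -(k-1)\ct(r)/A'(r)$ (from $A''/A' = (k-1)\ct(r)$) to get $\Hess G(r) = \tfrac{\ct(r)}{A'(r)}\bigl(g - k\,dr\otimes dr\bigr)$, take the partial trace over $S$, and then obtain $W_1 = A(r)W_0$ by the Leibniz rule for $\div_S$. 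All the bookkeeping ($\tr_S g = k$, $\tr_S(dr\otimes dr)=|\nabla^\top r|^2$, $|\nabla^\perp r|^2 = 1-|\nabla^\top r|^2$, $\langle\nabla^\top A(r), W_0\rangle = |\nabla^\top r|^2$) is right, and the computation is the one any reference would give. No gaps.
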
 

Let 
\begin{equation}
a(r):=k  \frac{A(r)}{A'(r)}\ct(r).
\end{equation}
Note that if $a(r)=k  \frac{A(r)}{A'(r)}\ct(r)$, then $\big(\sn(r)^k(1-a(r))\big)' =  \curv k\sn(r) A(r)$ and $a(0) = 1$. Hence $\mathrm{sign}(1-a(r)) = \kappa$. Thus, $a(r) \geq 1$ if $M = \mathbb{H}$; $a(r) \equiv 1$ if $M = \mathbb{R}^n$, and $a(r) \leq 1$ if $M = \mathbb{S}^n$. Moreover, as long as $\cs(r) \geq 0$, one also has $a(r) \geq 0$. Consequently, (because $|\nabla^\top r|^2 + |\nabla^\perp r|^2 = 1$) we have $\div_S W_1 \geq 1$ if $M=\mathbb{H}^n$; $\div_S W_1 =1$ if $M=\mathbb{R}^n$; and $1\geq \div_S W_1 \geq |\nabla^\top r|^2$ if $M=\mathbb{S}^n$ (and $\cs(r) \geq 0$). 

\subsection{The classical monotoncity formulae}

The lower bounds on $\div_S W_1$ may be used to show the following monotonicity formulae. The first (and most well-known) is area-monotonicity, which holds when $M = \mathbb{R}^n$ or when $M = \mathbb{H}^n$ (see \cite{And82} for $\mathbb{H}^n$, \cite{Si83} for instance for $\mathbb{R}^n$):

\begin{theorem}
\label{thm:classic-monotonicity}
Suppose $M \in \{\mathbb{H}^n, \mathbb{R}^n\}$. Let $\Sigma \subset B^n_R$ be a $k$-dimensional minimal submanifold in a geodesic ball of radius $R \in (0, \frac{1}{2}\mathrm{diam}(M))$ with $\partial \Sigma \subset \partial B^n_R$. Define
\[
Q_A(t) := \frac{|\Sigma \cap B^n_t|}{|B^k_t|}.
\]
Then $t \mapsto Q_A(t)$ is monotone increasing for $t \in (0, R)$ and is constant if and only if $\Sigma$ is a totally geodesic disk.
\end{theorem}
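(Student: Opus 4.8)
The plan is to run the classical ``vector field plus coarea'' argument, taking as input the divergence identities of Proposition \ref{divergences}. Write $V(t) := |\Sigma \cap B^n_t|$, so that $Q_A(t) = V(t)/(A(t)\,|\mathbb{S}^{k-1}|)$ and it suffices to prove that $t \mapsto V(t)/A(t)$ is nondecreasing on $(0,R)$. Since $\partial\Sigma \subset \partial B^n_R$ and $t < R$, we have $\partial(\Sigma\cap B^n_t) = \Sigma\cap\partial B^n_t$; for a.e.\ $t$ this slice is a smooth hypersurface in $\Sigma$ (Sard) with outward unit conormal $\eta = \nabla^\top r / |\nabla^\top r|$, and the coarea formula shows $V$ is locally absolutely continuous with $V'(t) = \int_{\Sigma\cap\partial B^n_t} |\nabla^\top r|^{-1}$ for a.e.\ $t$.

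The main step is to apply the first variation identity $\int_{\Sigma\cap B^n_t} \operatorname{div}_\Sigma W_1 = \int_{\Sigma\cap\partial B^n_t} \langle W_1, \eta\rangle$, which holds because $\Sigma$ is minimal (the normal part of $W_1$ contributes $-\langle W_1^\perp, \vec H\rangle = 0$). On the boundary, $\langle W_1,\eta\rangle = \tfrac{A(t)}{A'(t)}|\nabla^\top r|$, using $\langle\nabla r,\nabla^\top r\rangle = |\nabla^\top r|^2$. In the interior, Proposition \ref{divergences} gives $\operatorname{div}_\Sigma W_1 = |\nabla^\top r|^2 + a(r)\,|\nabla^\perp r|^2$, and since $a(r)\geq 1$ for $M\in\{\mathbb{H}^n,\mathbb{R}^n\}$ we obtain $\operatorname{div}_\Sigma W_1 \geq |\nabla^\top r|^2 + |\nabla^\perp r|^2 = 1$. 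Hence $V(t)\leq \tfrac{A(t)}{A'(t)}\int_{\Sigma\cap\partial B^n_t}|\nabla^\top r|$, and combining this with $|\nabla^\top r|\leq 1\leq |\nabla^\top r|^{-1}$ and the coarea formula,
\[
\frac{A'(t)}{A(t)}\, V(t) \;\leq\; \int_{\Sigma\cap\partial B^n_t} |\nabla^\top r| \;\leq\; \int_{\Sigma\cap\partial B^n_t} |\nabla^\top r|^{-1} \;=\; V'(t) .
\]
Thus $\tfrac{d}{dt}\log\!\big(V(t)/A(t)\big)\geq 0$ wherever $V>0$; since $V$ is nondecreasing, $\{V>0\}$ is a subinterval $(t_\ast,R)$ and $V/A$ vanishes on $(0,t_\ast]$, so $V/A$ is nondecreasing on all of $(0,R)$, which is the claim.

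For the rigidity statement, suppose $\Sigma\neq\emptyset$ and $Q_A$ is constant. Then all of the inequalities above are equalities for a.e.\ $t$: in the Euclidean case the equality $|\nabla^\top r| = |\nabla^\top r|^{-1}$ forces $|\nabla^\top r|\equiv 1$, while in the hyperbolic case $\operatorname{div}_\Sigma W_1 = 1$ together with $a(r)>1$ for $r>0$ forces $|\nabla^\perp r|\equiv 0$; either way $\nabla r$ is everywhere tangent to $\Sigma$. Hence $\Sigma$ is a union of integral curves of $\nabla r$, i.e.\ of unit-speed geodesics through $o$; following such a ray inward shows $o\in\Sigma$, and a short argument with the resulting scaling (cone) structure, together with smoothness of $\Sigma$ at $o$, identifies a neighbourhood of $o$ in $\Sigma$ with $\exp_o$ of a $k$-plane in $T_oM$. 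Extending the rays out to $\partial B^n_R$ shows $\Sigma$ is exactly this totally geodesic $k$-disk through $o$. Conversely, if $\Sigma$ is a totally geodesic $k$-disk through $o$ then $|\Sigma\cap B^n_t| = |B^k_t|$ for every $t$, so $Q_A\equiv 1$.

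Each individual inequality here is immediate from Proposition \ref{divergences}, the elementary bound $|\nabla^\top r|\leq 1$, and the coarea formula, so the bulk of the care goes into the degenerate locus rather than any substantial estimate: one must handle the point $o$ (either observe that $W_1 = \tfrac{A(r)}{A'(r)}\nabla r$ extends continuously --- indeed smoothly --- across $o$, or run the divergence theorem on $\Sigma\cap(B^n_t\setminus B^n_\varepsilon)$ and let $\varepsilon\to 0$), the ``almost every $t$'' caveats coming from Sard's theorem and the coarea formula, and the upgrade in the rigidity case from ``$\nabla r$ tangent to $\Sigma$'' to ``$\Sigma$ is a totally geodesic disk through $o$''. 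None of these is serious, consistent with this being a known result whose proof we record for motivation.
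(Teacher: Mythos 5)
Your argument is correct and follows essentially the same route as the paper's: the key steps in both are the first-variation (divergence) identity for the radial vector field $W_1 = \tfrac{A(r)}{A'(r)}\nabla r$, the lower bound $\operatorname{div}_\Sigma W_1 \geq 1$ coming from $a(r)\geq 1$ in $\mathbb{R}^n$ and $\mathbb{H}^n$, and the coarea identity $V'(t) = \int_{\Sigma\cap\partial B^n_t}|\nabla^\top r|^{-1}$, combined via $|\nabla^\top r|\leq 1\leq |\nabla^\top r|^{-1}$. The paper records an exact formula for $Q_A'(t)$ rather than chaining two inequalities into a log-derivative bound, but that is cosmetic; your handling of the degenerate locus, the extension of $W_1$ across $o$, and the rigidity upgrade are all consistent with (and slightly more explicit than) the paper's treatment.
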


It is not known if area-monotonicity also holds when $M = \mathbb{S}^n$. However, a closely related weighted monotonicity was proven in \cite{GS87} for the sphere. In fact, this weighted monotonicity works for all of the space forms and was a starting point for our investigation into a weighted monotonicity formula in the prescribed point problem. 

\begin{theorem}
\label{thm:classic-weighted-monotonicity}
Suppose $M \in \{\mathbb{H}^n, \mathbb{R}^n, \mathbb{S}^n\}$. Let $\Sigma \subset B^n_R$ be a $k$-dimensional minimal submanifold in a geodesic ball of radius $R \in (0, \frac{1}{2}\mathrm{diam}(M))$ with $\partial \Sigma \subset \partial B^n_R$. Define
\[
Q_I(t) :=  \frac{1}{|B^k_t|} \int_{\Sigma \cap B^n_t} |\nabla^\top r|^2. 
\]
Then $t \mapsto Q_I(t)$ is monotone increasing for $t \in (0, R)$ and is constant if and only if $\Sigma$ is a totally geodesic disk.
\end{theorem}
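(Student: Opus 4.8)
The plan is to mimic the standard first-variation/coarea argument behind area monotonicity, but using the weighted quantity $\int_{\Sigma\cap B^n_t}|\nabla^\top r|^2$ and the vector field $W_1 = \frac{A(r)}{A'(r)}\nabla r$. First I would apply the divergence theorem to $W_1$ on $\Sigma\cap B^n_t$: since $\Sigma$ is minimal and $\partial(\Sigma\cap B^n_t)\subset\{r=t\}$ (for $t<R$, and noting $\partial\Sigma\subset\partial B^n_R$ contributes nothing for $t<R$), one gets
\[
\int_{\Sigma\cap B^n_t}\div_\Sigma W_1 = \int_{\Sigma\cap\{r=t\}}\langle W_1,\nu\rangle,
\]
where $\nu$ is the outward conormal. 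On $\{r=t\}$ we have $W_1 = \frac{A(t)}{A'(t)}\nabla r$ and $\langle\nabla r,\nu\rangle = |\nabla^\top r|$, so the boundary term equals $\frac{A(t)}{A'(t)}\int_{\Sigma\cap\{r=t\}}|\nabla^\top r|$. On the interior, Proposition \ref{divergences} gives $\div_\Sigma W_1 = |\nabla^\top r|^2 + a(r)|\nabla^\perp r|^2$ with $a\le 1$ in $\mathbb{S}^n$ (and $a=1$, resp. $\ge 1$, in $\mathbb{R}^n$, $\mathbb{H}^n$) — but here I want the reverse inequality, so I should instead write $\div_\Sigma W_1 = 1 - (1-a(r))|\nabla^\perp r|^2$ and, since $1-a(r)\le 1$... actually the key identity I want is $\div_\Sigma W_1 \ge |\nabla^\top r|^2$ only in $\mathbb{S}^n$ and with equality-type control; let me instead use that $\div_\Sigma W_1 - |\nabla^\top r|^2 = a(r)|\nabla^\perp r|^2 \ge 0$ whenever $\cs(r)\ge 0$, which holds on $B^n_R$ by the assumption $R<\tfrac12\diam(M)$.

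The heart of the argument is then to differentiate in $t$. Writing $I(t) := \int_{\Sigma\cap B^n_t}|\nabla^\top r|^2$ and $V(t):=|B^k_t| = A(t)|\mathbb{S}^{k-1}|$, the coarea formula gives $I'(t) = \int_{\Sigma\cap\{r=t\}}\frac{|\nabla^\top r|^2}{|\nabla^\top r|}$ and $V'(t) = A'(t)|\mathbb{S}^{k-1}|$. Combining the divergence identity with the inequality $\int\div_\Sigma W_1 \ge \int|\nabla^\top r|^2 = I(t)$ yields
\[
\frac{A(t)}{A'(t)}\int_{\Sigma\cap\{r=t\}}|\nabla^\top r| \ \ge\ I(t),
\]
i.e. $I'(t) \ge \int_{\Sigma\cap\{r=t\}}|\nabla^\top r| \ge \frac{A'(t)}{A(t)} I(t)$, where the first step uses Cauchy–Schwarz $\big(\int|\nabla^\top r|\big)^2 \le |\Sigma\cap\{r=t\}|\cdot\int|\nabla^\top r|^2$ — wait, that goes the wrong way. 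The correct route is the one-sided chain: from $I'(t) = \int_{\{r=t\}}|\nabla^\top r|$ and $\frac{A(t)}{A'(t)}\int_{\{r=t\}}|\nabla^\top r|\ge I(t)$ one directly gets $\frac{A(t)}{A'(t)} I'(t) \ge I(t)$, hence $\big(\log I(t)\big)' \ge \big(\log A(t)\big)'$, which is exactly $Q_I'(t) = \big(I(t)/V(t)\big)' \ge 0$ after noting $V(t)$ is proportional to $A(t)$. So no Cauchy–Schwarz is needed — the boundary integral of $|\nabla^\top r|$ is itself $I'(t)$, which is the clean cancellation that makes the weighted version work in all space forms.

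The main obstacle, and the point requiring care, is the equality case: $Q_I$ is constant iff equality holds throughout, which forces (i) $a(r)|\nabla^\perp r|^2 \equiv 0$ on $\Sigma$, and (ii) equality in the conormal bound $\langle\nabla r,\nu\rangle = |\nabla^\top r|$ to be consistent with the global identity, forcing $\nabla^\perp r \equiv 0$, i.e. $\nabla r$ is everywhere tangent to $\Sigma$. In $\mathbb{R}^n$ this immediately says $\Sigma$ is a cone, and combined with smoothness at $y$... but here there is no assumption that $y\in\Sigma$ or that $o\in\Sigma$ — re-reading, $\Sigma$ passes through $y$ by the standing assumption in Section 2, but the cleanest argument is: $\nabla^\perp r\equiv 0$ means each geodesic sphere $\{r=t\}$ meets $\Sigma$ orthogonally, so $\Sigma$ is ruled by radial geodesics from $o$; since $\Sigma$ is a smooth $k$-manifold, it must be (an open piece of) a totally geodesic $k$-disk through $o$. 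In $\mathbb{H}^n$ one must additionally rule out the contribution of $(a(r)-1)|\nabla^\perp r|^2$; since $a\ge 1$ there, $a|\nabla^\perp r|^2\equiv 0$ again gives $\nabla^\perp r\equiv 0$. I would present the equality analysis by tracking which inequalities were used and invoking the structure of $\div_\Sigma W_1$ from Proposition \ref{divergences}; this is the step most likely to need a short lemma on rigidity of radially-ruled minimal submanifolds, which is standard but should be stated explicitly.
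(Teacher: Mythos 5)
Your proposal is correct and, after the self-corrections midway through, arrives at exactly the paper's argument: apply the divergence theorem to $W_1$ on $\Sigma\cap B^n_t$, identify the boundary term with $\tfrac{A(t)}{A'(t)} I'(t)$ via the coarea formula, and use $\div_\Sigma W_1 = |\nabla^\top r|^2 + a(r)|\nabla^\perp r|^2 \geq |\nabla^\top r|^2$ (valid since $a(r)\geq 0$ when $\cs(r)\geq 0$) to get $(\log I)' \geq (\log A)'$. The paper additionally records the exact derivative formula $Q_I'(t) = \frac{1}{|B^k_t|}\frac{A'(t)}{A(t)}\int_{\Sigma\cap B^n_t} a(r)|\nabla^\perp r|^2$ (note the paper's displayed $|\nabla^\top r|^2$ there is evidently a typo for $|\nabla^\perp r|^2$, which your computation has correctly), and this explicit expression is what cleanly drives the rigidity statement you sketch at the end.
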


\begin{proof}[Proofs of Theorem \ref{thm:classic-monotonicity} and \ref{thm:weighted-monotonicity}]
By the co-area formula, the divergence theorem, and minimality of $\Sigma$, 
\begin{equation}
\label{eq:classic-monotonicity}
\frac{A(t)}{A'(t)} \frac{d}{dt} \int_{\Sigma \cap B^n_t} |\nabla^\top r|^2 = \frac{A(t)}{A'(t)}\int_{\Sigma \cap \partial B^n_t} |\nabla^\top r| = \int_{\Sigma \cap \partial B^n_t} \langle W_1, \nu \rangle = \int_{\Sigma \cap B^n_t} \mathrm{div}_{\Sigma}W_1.
\end{equation}
It follows by direct computation that
\[
Q_I'(t) = \frac{1}{|B^k_t|}\frac{A'(t)}{A(t)} \int_{\Sigma \cap B^n_t} a(r) |\nabla^\top r|^2 \geq 0. 
\]
A similar computation yields 
\[
Q_A'(t) = \frac{1}{|B^k_t|} \left(\int_{\Sigma \cap \partial B^n_t} \frac{|\nabla^\perp r|^2}{|\nabla^\top r|} + \frac{A'(t)}{A(t)} \int_{\Sigma \cap B^n_t} \big(a(r) -1\big)|\nabla^\top r|^2\right),
\]
which is evidently nonnegative if $a(r) \geq 1$ (hence for $M = \mathbb{R}^n$ and $M = \mathbb{H}^n$).
\end{proof}

\subsection{Boundary monotonicity}

Similarly, so long as $\cs(r) \geq 0$, one has the lower bound $\div_S W_0 \geq 0$. This can be used to derive another monotone quantity along the boundaries of centred geodesic balls. This monotonicity was essentially observed by Choe and Gulliver in \cite{CG92}. 

\begin{theorem}
\label{thm:bdry-monotonicity}
Suppose $M \in \{\mathbb{H}^n, \mathbb{R}^n, \mathbb{S}^n\}$. Let $\Sigma \subset B^n_R$ be a $k$-dimensional minimal submanifold in a geodesic ball of radius $R \in (0, \frac{1}{2}\mathrm{diam}(M))$ with $\partial \Sigma \subset \partial B^n_R$. Define \[Q_\pr (t):= \frac{1}{|\pr B^k_t|} \int_{\Sigma \cap \pr B^n_t} |\nabla^\top r|.\] Then $t \mapsto Q_\pr(t)$ is monotone increasing for $t \in (0, R)$ and is constant if and only if $\Sigma$ is a totally geodesic disk.
\end{theorem}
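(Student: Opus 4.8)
The plan is to mimic the proof of Theorems \ref{thm:classic-monotonicity} and \ref{thm:classic-weighted-monotonicity}, but replacing the vector field $W_1$ with $W_0 = \frac{1}{A'(r)}\nabla r = \nabla G(r)$, whose divergence lower bound $\div_S W_0 \geq 0$ (valid whenever $\cs(r) \geq 0$, which holds on $B^n_R$ since $R < \frac{1}{2}\diam(M)$) is exactly what we need. First I would set up the key identity by combining the first variation formula for $\Sigma$ with the divergence theorem: for a minimal $\Sigma$ and any ambient vector field $W$, one has $\int_{\Sigma \cap B^n_t} \div_\Sigma W = \int_{\Sigma \cap \pr B^n_t} \langle W, \nu\rangle$, where $\nu$ is the outward conormal. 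Applied to $W_0$, and using that on $\Sigma \cap \pr B^n_t$ the conormal $\nu$ is the unit tangential part of $\nabla r$ normalised, i.e. $\langle W_0, \nu\rangle = \frac{1}{A'(t)}\langle \nabla r, \frac{\nabla^\top r}{|\nabla^\top r|}\rangle = \frac{1}{A'(t)}|\nabla^\top r|$, this gives
\[
\frac{1}{A'(t)} \int_{\Sigma \cap \pr B^n_t} |\nabla^\top r| = \int_{\Sigma \cap B^n_t} \div_\Sigma W_0 = k\int_{\Sigma \cap B^n_t} \frac{\ct(r)}{A'(r)}|\nabla^\perp r|^2 \geq 0.
\]

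Next I would differentiate $Q_\pr(t)$. Writing $P(t) := \int_{\Sigma \cap \pr B^n_t} |\nabla^\top r|$, the display above says $P(t) = A'(t) \int_{\Sigma \cap B^n_t} \div_\Sigma W_0$. Since $|\pr B^k_t| = A'(t)|\mathbb{S}^{k-1}|$ (as $|B^k_t| = A(t)|\mathbb{S}^{k-1}|$), we have $Q_\pr(t) = \frac{1}{|\mathbb{S}^{k-1}|}\int_{\Sigma \cap B^n_t}\div_\Sigma W_0$, and therefore by the coarea formula
\[
Q_\pr'(t) = \frac{1}{|\mathbb{S}^{k-1}|}\frac{d}{dt}\int_{\Sigma \cap B^n_t}\div_\Sigma W_0 = \frac{1}{|\mathbb{S}^{k-1}|}\int_{\Sigma \cap \pr B^n_t} \frac{\div_\Sigma W_0}{|\nabla^\top r|} = \frac{k}{|\mathbb{S}^{k-1}|}\int_{\Sigma \cap \pr B^n_t} \frac{\ct(t)}{A'(t)}\frac{|\nabla^\perp r|^2}{|\nabla^\top r|} \geq 0,
\]
where the last inequality uses $\ct(t) \geq 0$ on $(0, \frac{1}{2}\diam(M))$. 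This establishes monotonicity. (One should note that the integrand $\frac{|\nabla^\perp r|^2}{|\nabla^\top r|}$ is integrable on $\Sigma \cap \pr B^n_t$ for a.e.\ $t$ by the coarea formula, which is the standard technical point in these arguments; I would handle boundary terms and the a.e.\ differentiability exactly as in the cited classical references.)

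Finally, for the rigidity statement: if $Q_\pr$ is constant then $Q_\pr' \equiv 0$, forcing $|\nabla^\perp r| \equiv 0$ on $\Sigma \cap \pr B^n_t$ for a.e.\ $t$, hence $|\nabla^\perp r| \equiv 0$ on all of $\Sigma$. This means $\nabla r$ is everywhere tangent to $\Sigma$, so $\Sigma$ is a union of radial geodesic segments through $o$; combined with smoothness and minimality (and $\pr\Sigma \subset \pr B^n_R$), $\Sigma$ must be a totally geodesic disk through the origin, and conversely such a disk clearly gives $|\nabla^\perp r| \equiv 0$ and hence constant $Q_\pr$. I expect the main obstacle — as usual for these formulae — to be the careful justification of the differentiation under the integral sign and the integrability/measure-theoretic regularity of $\Sigma \cap \pr B^n_t$ for a.e.\ $t$; the algebraic computation itself is immediate from Proposition \ref{divergences} and the formula $|\pr B^k_t| = A'(t)|\mathbb{S}^{k-1}|$.
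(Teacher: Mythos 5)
Your approach is essentially the paper's: apply the divergence theorem to $W_0 = \frac{1}{A'(r)}\nabla r$ and invoke the nonnegative divergence formula of Proposition \ref{divergences}. The only presentational difference is that you differentiate $Q_\pr$ directly, whereas the paper integrates $\div_\Sigma W_0$ over annuli $B^n_t\setminus B^n_s$ and bounds the difference $Q_\pr(t)-Q_\pr(s)$ from below by zero.

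One technical caveat worth noting: your intermediate identity
\[
\frac{1}{A'(t)}\int_{\Sigma\cap\pr B^n_t}|\nabla^\top r| = \int_{\Sigma\cap B^n_t}\div_\Sigma W_0
\]
holds as an equality only when $o\notin\Sigma$. If $o\in\Sigma$, then $W_0 \sim r^{1-k}\nabla r$ is singular at $o$, and carrying out the divergence theorem after excising an $\epsilon$-ball produces an inner boundary term that limits to $|\mathbb{S}^{k-1}|\Theta(\Sigma,o)$ as $\epsilon\to 0$; the correct identity is therefore $Q_\pr(t) = \frac{1}{|\mathbb{S}^{k-1}|}\int_{\Sigma\cap B^n_t}\div_\Sigma W_0 + \Theta(\Sigma,o)$. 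That constant vanishes upon differentiation, so your expression for $Q_\pr'(t)$, hence the monotonicity and the rigidity discussion, remain correct; but the intermediate equality as written is off by that term. The paper's annulus formulation avoids the singularity at the origin altogether, which is exactly why that route is cleaner.
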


\begin{proof}
Let $\nu = \frac{\nabla^\top r}{|\nabla^\top r|}$ be the outward-pointing conormal of $\Sigma \cap \pr B^n_t$ in $\Sigma$. Recall that $|\pr B^k_t| = |\mathbb{S}^{k-1}| A'(t)$, so that 
\[ Q_\pr(t) = \frac{1}{|\mathbb{S}^{k-1}|} \int_{\Sigma\cap \pr B^n_t} \langle W_0, \nu\rangle. \]
Then for $0<s<t<R$, since $\Sigma$ is minimal the divergence theorem gives 
\[
\begin{split}
Q_\pr(t) - Q_\pr(s) &= \frac{1}{|\mathbb{S}^{k-1}|} \int_{\Sigma \cap (B^n_t\setminus B^n_s)} \div_\Sigma W_0^\top = \frac{1}{|\mathbb{S}^{k-1}|} \int_{\Sigma \cap (B^n_t\setminus B^n_s)} \div_\Sigma W_0
\\& \geq \frac{k}{|\mathbb{S}^{k-1}|} \int_{\Sigma \cap (B^n_t\setminus B^n_s)} \frac{\ct(r)}{A'(r)}|\nabla^\perp r|^2 \geq 0.
\end{split}
\]
\end{proof}

\subsection{Comparison of monotone quantities}

The calculation of (\ref{eq:classic-monotonicity}) implies that
\[
Q_{\partial}(t) = Q_I(t) +\frac{1}{A(t)} \int_{\Sigma \cap B^n_t}a(r)|\nabla^\perp r|^2. 
\]
An immediate consequence is that $Q_{\partial}(t) \geq Q_I(t)$ (as $R<\frac{1}{2}\diam(M)$, we always have $a(r)\geq 0$). Using the sign of $1-a(r)$, we conclude that for $t \in (0, R)$, we have
\[
\begin{cases}Q_{\partial}(t)  \geq Q_A(t) \geq Q_I(t), & M = \mathbb{H}^n \\ Q_{\partial}(t) = Q_A(t) \geq Q_I(t), & M = \mathbb{R}^n \\Q_A(t) \geq  Q_{\partial}(t) \geq Q_I(t), & M = \mathbb{S}^n \end{cases}. 
\]

\subsection{Area estimate at the origin}

Assume that $o\in\Sigma$. As $\Sigma$ is smooth, it follows that $|\nabla^\top r|\to 1$ as we approach $o$. This implies that
\[
Q_{\partial}(0) = Q_A(0) = Q_I(0) = \Theta(\Sigma, o) \geq 1. 
\]

If $\Sigma$ contains the origin $o$, then the classical area estimate for minimal submanifolds through the centre of the ball states: 

\begin{corollary}\label{classic-monotonicity}
Let $M,R,\Sigma$ be as above. If $o \in \Sigma$, then 
\[
|\Sigma| \geq |B^k_R|. 
\] 
Moreover, equality holds if and only if $\Sigma$ is a totally geodesic disk.
\end{corollary}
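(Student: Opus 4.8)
The plan is to combine the monotonicity of $Q_I$ (Theorem \ref{thm:classic-weighted-monotonicity}) with the behaviour of $|\nabla^\top r|$ near the origin. First I would observe that since $o \in \Sigma$ and $\Sigma$ is a smooth submanifold, $T_o\Sigma$ is a $k$-plane through $o$, so the tangential part $\nabla^\top r \to 1$ as $x \to o$ along $\Sigma$; combined with the asymptotics \eqref{asymptotics-1} this gives $Q_I(0^+) = \Theta(\Sigma, o) \geq 1$, where the density bound $\Theta(\Sigma,o)\geq 1$ is the standard consequence of monotonicity (each sheet through $o$ contributes at least the density of a $k$-plane). Then by Theorem \ref{thm:classic-weighted-monotonicity}, $Q_I$ is monotone increasing on $(0,R)$, so $Q_I(R^-) \geq Q_I(0^+) \geq 1$. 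Since $|\nabla^\top r|^2 \leq 1$ pointwise, we have
\[
|\Sigma| \;\geq\; \int_{\Sigma \cap B^n_R} |\nabla^\top r|^2 \;=\; |B^k_R|\, Q_I(R^-) \;\geq\; |B^k_R|,
\]
which is the desired inequality. (One should take the limit $t \to R$ carefully; since $\partial\Sigma \subset \partial B^n_R$, the sets $\Sigma \cap B^n_t$ exhaust $\Sigma$ as $t \to R$, and monotone convergence applies.)

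For the equality case, suppose $|\Sigma| = |B^k_R|$. Then both inequalities above are equalities: $|\nabla^\top r|^2 \equiv 1$ on $\Sigma$, and $Q_I$ is constant on $(0,R)$. The constancy of $Q_I$ is exactly the equality case in Theorem \ref{thm:classic-weighted-monotonicity}, which forces $\Sigma$ to be a totally geodesic disk. (Alternatively, one can argue directly: $|\nabla^\top r|^2 \equiv 1$ means $|\nabla^\perp r|^2 \equiv 0$, i.e. $\nabla r$ is everywhere tangent to $\Sigma$, so $\Sigma$ is a union of geodesic rays from $o$; smoothness at $o$ then forces $\Sigma$ to be a single totally geodesic $k$-disk through $o$, and $\partial\Sigma\subset\partial B^n_R$ pins down the radius.) Conversely, if $\Sigma = B^k_R$ is a totally geodesic disk through $o$, then $\nabla r$ is tangent to $\Sigma$ everywhere, so $|\nabla^\top r|^2 \equiv 1$ and $|\Sigma| = |B^k_R|$.

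The only genuinely delicate point is justifying $Q_I(0^+) \geq 1$, i.e. the density lower bound at a smooth point; but since $o \in \Sigma$ and $\Sigma$ is smooth (so locally a graph over $T_o\Sigma$ with vanishing first-order correction), this follows from the asymptotics \eqref{asymptotics-1} together with the fact that, near $o$, $\Sigma$ looks to first order like the $k$-plane $T_o\Sigma$, on which $|\nabla^\top r| = 1$; the standard monotonicity argument then yields $\Theta(\Sigma,o)\geq 1$ with equality iff $\Sigma$ is multiplicity-one and flat to higher order. Everything else is a direct consequence of the monotonicity established in the preceding theorems, so this corollary is essentially immediate once those are in hand.
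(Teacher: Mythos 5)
Your proof is correct and follows essentially the same route as the paper: establish $Q_I(0^+)=\Theta(\Sigma,o)\ge 1$ from smoothness at $o$, apply the monotonicity of $Q_I$ (Theorem \ref{thm:classic-weighted-monotonicity}), and use the pointwise bound $|\nabla^\top r|^2\le 1$ (i.e.\ $Q_A\geq Q_I$) to pass to $|\Sigma|\ge|B^k_R|$, with the equality case traced back to the rigidity in the monotonicity theorem. The paper additionally notes two alternative derivations (via $Q_\partial$ and via a direct divergence-theorem argument with $W_1-A(R)W_0$), but your argument matches the paper's primary one.
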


This corollary may be deduced from any of the monotonicity formulae above as follows: 

\begin{itemize} 
\item Using the monotonicity of $Q_I(t)$ (or $Q_A(t)$, if available), we have for every $t \in (0, R]$ that
\[
Q_A(t) \geq Q_I(t) \geq Q_I(0) = \Theta(\Sigma, o). 
\]
This means for every $t \in (0, R]$
\[
|\Sigma \cap B^n_t| \geq A(t) |\mathbb{S}^{k-1}|\Theta(\Sigma, o) = |B^n_t| \Theta(\Sigma, o).
\]
Taking $t = R$ is the area estimate. 
\item Using the monotonicity of $Q_\partial(t)$, we have for every $t \in (0, R]$ that
\[
\frac{|\Sigma \cap \partial B^n_t|}{|\partial B^k_t|} \geq Q_\partial (t) \geq Q_\partial(0) = \Theta(\Sigma, o).
\]
This means for every $t \in (0, R]$, 
\[
|\Sigma \cap \partial B^n_t| \geq A'(t) |\mathbb{S}^{k-1}|\Theta(\Sigma, o) = |\partial B^n_t| \Theta(\Sigma, o)
\]
Integrating this implies the area estimate for every $t \in (0, R]$. 
\end{itemize}

Corollary \ref{classic-monotonicity} also follows directly from the divergence theorem applied to the linear combination $W_1- A(R)W_0$. In fact, using this direct method yields the area estimate for any possible $R$, whereas the monotonicity formulae are restricted to $R<\frac{1}{2}\diam(M)$. 

\section{Moving-centre monotonicity formulae} 

In this section, we present our novel monotonicity formulae, and also show that they may be used to deduce sharp area bounds for minimal submanifolds through a prescribed point. 

\begin{lemma}
\label{lem:weighted-monotonicity}
Let $M \in \{\mathbb{H}^n, \mathbb{R}^n, \mathbb{S}^n\}$. Consider a smooth function $f$ on $M$ and its sublevel sets $E_t  = \{f\leq t\}$. 
Let $\Sigma$ be a $k$-dimensional minimal submanifold in $E_1$ with $\partial \Sigma \subset \partial E_1$. Assume that there is a family of functions $w_t$ on $\Sigma$ and a family of vector fields $W_t$ on $M$ such that:
\begin{enumerate}
\item $w_t - \langle W_t, \frac{\nabla^\top f}{f}\rangle \geq 0$ on $\Sigma\cap \{f=t\}$;
\item $\div_\Sigma W_t \geq w_t - t \pr_t w_t$ on $\Sigma \cap \{f\leq t\}$.
\end{enumerate}

Then the quantity
\begin{equation}
\frac{\int_{\Sigma\cap E_t}w_t }{t}
\end{equation}
is monotone increasing for $t \leq 1$. 
\end{lemma}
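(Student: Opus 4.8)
The plan is to set $I(t):=\int_{\Sigma\cap E_t}w_t$ and show that $\frac{d}{dt}\bigl(I(t)/t\bigr)\ge 0$ for a.e.\ $t\in(0,1]$, which for $t>0$ is equivalent to $t\,I'(t)\ge I(t)$. All three hypotheses enter: the coarea formula produces the boundary term in $I'(t)$, the divergence bound (2) controls the interior, minimality of $\Sigma$ lets us discard the normal part of $W_t$, and the boundary inequality (1) converts a boundary integral of $\langle W_t,\nabla^\top f\rangle$ into one of $w_t$.

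First I would compute $I'(t)$. There are two sources of $t$-dependence, the domain $\Sigma\cap E_t$ and the integrand $w_t$; writing $J(t,s):=\int_{\Sigma\cap E_s}w_t$ so that $I(t)=J(t,t)$, the chain rule gives $I'(t)=\partial_tJ(t,t)+\partial_sJ(t,t)$. The first term is $\int_{\Sigma\cap E_t}\partial_t w_t$. For the second, the coarea formula applied to $f|_\Sigma$ (whose tangential gradient has norm $|\nabla^\top f|$) yields $\int_{\Sigma\cap E_s}w_t=\int_{0}^{s}\bigl(\int_{\Sigma\cap\{f=\tau\}}\tfrac{w_t}{|\nabla^\top f|}\bigr)\,d\tau$, hence $\partial_sJ(t,s)\big|_{s=t}=\int_{\Sigma\cap\{f=t\}}\tfrac{w_t}{|\nabla^\top f|}$. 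Here one invokes Sard's theorem: for a.e.\ $t$ the value $t$ is regular for $f|_\Sigma$, so $|\nabla^\top f|>0$ on the slice and $\Sigma\cap\{f=t\}$ is a smooth hypersurface in $\Sigma$. Altogether, for a.e.\ $t$,
\[ I'(t)=\int_{\Sigma\cap E_t}\partial_t w_t+\int_{\Sigma\cap\{f=t\}}\frac{w_t}{|\nabla^\top f|}. \]

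Next I would estimate $\int_{\Sigma\cap E_t}(w_t-t\,\partial_t w_t)$. Since $\Sigma$ is minimal, $\div_\Sigma W_t=\div_\Sigma W_t^\top$, and for $t<1$ the boundary of $\Sigma\cap E_t$ in $\Sigma$ is exactly $\Sigma\cap\{f=t\}$, because $\partial\Sigma\subset\partial E_1\subset\{f=1\}$ is disjoint from $E_t$. The divergence theorem then gives $\int_{\Sigma\cap E_t}\div_\Sigma W_t=\int_{\Sigma\cap\{f=t\}}\langle W_t,\nu\rangle$ with outward conormal $\nu=\nabla^\top f/|\nabla^\top f|$. Combining this with the divergence bound (2), then using $f\equiv t$ on the slice and the boundary inequality (1),
\[ \int_{\Sigma\cap E_t}(w_t-t\,\partial_t w_t)\ \le\ \int_{\Sigma\cap\{f=t\}}\frac{\langle W_t,\nabla^\top f\rangle}{|\nabla^\top f|}\ =\ t\int_{\Sigma\cap\{f=t\}}\frac{\langle W_t,\nabla^\top f/f\rangle}{|\nabla^\top f|}\ \le\ t\int_{\Sigma\cap\{f=t\}}\frac{w_t}{|\nabla^\top f|}. \]
Rearranging and substituting the formula for $I'(t)$ yields $t\,I'(t)\ge I(t)$, i.e.\ $\frac{d}{dt}(I(t)/t)\ge 0$, for a.e.\ $t\in(0,1)$. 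Since $t\mapsto I(t)/t$ is absolutely continuous (domain and integrand vary smoothly off the null set of singular values), it is monotone increasing on $(0,1)$, and the endpoint $t=1$ is included by continuity of $I$ — or by running the same divergence-theorem step on $\Sigma\cap E_1$, where $\partial\Sigma$ now also contributes to the boundary.

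The step I expect to be the main obstacle is the rigorous first-variation formula for $I(t)$ — differentiating under the integral and through the moving domain while $\{f=t\}$ may fail to be smooth for a null set of $t$ — rather than the chain of inequalities, which is short once that formula is in hand. A cleaner way to sidestep much of this is to argue in integrated form: for $0<t_1<t_2\le 1$, use the global coarea formula to write $\frac{I(t_2)}{t_2}-\frac{I(t_1)}{t_1}=\int_{t_1}^{t_2}\frac{t\,I'(t)-I(t)}{t^2}\,dt$ and apply the pointwise bound above under the integral sign.
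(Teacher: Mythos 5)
Your proof is correct and takes essentially the same route as the paper's: differentiate via the coarea formula, apply the divergence theorem (using minimality to reduce $\div_\Sigma W_t$ to the tangential part), rewrite $\langle W_t,\nabla^\top f\rangle$ as $t\langle W_t,\nabla^\top f/f\rangle$ on the slice $\{f=t\}$, and invoke hypotheses (2) then (1). The rearrangement into $tI'(t)\ge I(t)$ and the extra remarks on Sard's theorem and absolute continuity are cosmetic and welcome refinements, not a different argument.
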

\begin{proof}
By the coarea formula, we have

\[\frac{d}{dt}\left(\frac{1}{t} \int_{\Sigma\cap E_t} w_t\right) = \frac{1}{t} \int_{\Sigma \cap \pr E_t} \frac{w_t}{|\nabla^\top f|} - \frac{1}{t^2}\int_{\Sigma\cap E_t} w_t +\frac{1}{t} \int_{\Sigma\cap E_t} \pr_t w_t.\] 
Note that the outer unit conormal of $\Sigma\cap \pr E_t$ in $\Sigma \cap E_t$ is $\frac{\nabla^\top f}{|\nabla^\top f|}$. 
Using assumption (2) and the divergence theorem, we therefore have
\[\frac{d}{dt}\left(\frac{1}{t} \int_{\Sigma\cap E_t} w_t\right) \geq \frac{1}{t} \int_{\Sigma \cap \pr E_t} \frac{1}{|\nabla^\top f|} \left( w_t - \langle W_t , \frac{\nabla^\top f}{f}\rangle \right).\] The right hand side is nonnegative by assumption (1), which completes the proof. 
\end{proof}

\begin{definition}
Let $y\in B^n_R$, $s_y=s(y) >0$. We recall the following two definitions from \cite{NZ}:
\begin{equation}
\label{eq:def-u}
u(s) := 
\begin{cases}
\cs^{-1}\Big(\frac{\cs(s - s_y)\cs(R)}{\cs(s)}\Big), & M= \{\mathbb{H}^n, \mathbb{S}^n\}\\
\big(R^2 + (s_y-s)^2  - s^2\big)^{\frac{1}{2}}, & M = \mathbb{R}^n
\end{cases},
\end{equation}
\begin{equation}
F(s) := A'(u(s))u'(s) \cs(s-s_y)^2, 
\end{equation}
and set $u_s(x) = u(s(x))$. We recall the shorthand $u_s'(x):= u'(s(x))$ and note that $u_s' \leq 0$ (as can be directly checked). Consequently, $F(s) \leq 0$. Further define $f = \frac{A(r_y)}{A(u_s)}$ and its sublevel sets $E_t = \{f\leq t\}$, so that $E_1$ is precisely $B^n_R$ (at least assuming $\cs(R + s_y) \geq 0$). 

\begin{figure}[h]
\centering
\includegraphics[scale=0.75]{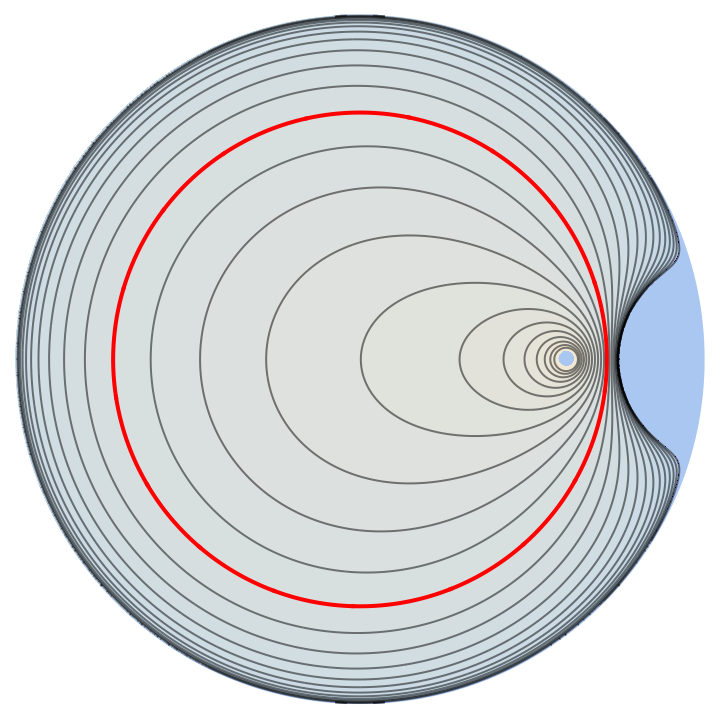}
\caption{Representation of the sets $E_t$, $t \in (0, \infty)$, in the Poincar\'e disk model of $\mathbb{H}^n$. As $t\to 0$, the sets $E_t$ approximate balls around $y$. Then $E_1 = B^n_R$, which is represented by the red (thicker) circle in the figure above. Finally, as $t\to \infty$, the sets cover a halfspace of $\mathbb{H}^n$ (with totally geodesic boundary). }
\label{figure:Et}
\end{figure}

We now set \begin{equation}
\tilde{F}_t(s) :=  t^2 \frac{A(u(s))}{A'(A^{-1}(tA(u(s))))^2 \cs(A^{-1}(tA(u(s))))^2 } F(s)
\end{equation}
and for $i,j \in \{0,1\}$ the define the weights
\begin{equation}\label{eq:weight}
w_{t, i, j} = |\nabla^\top r_y|^2 + i |\nabla^\perp r_y|^2 - j \tilde{F}_t(s) \frac{A'(u_s)u'_s}{A(u_s)} \cs(\rho)^2 |\nabla^\top s|^2.
\end{equation}

\end{definition} 

\begin{remark}
\label{rmk:F'}
It was shown in \cite{NZ} that $F'(s)\geq 0$ is equivalent to $k \cs(u(s))^2 \geq 2$. In particular, this condition is always satisfied if $M\in\{\mathbb{H}^n,\mathbb{R}^n\}$.
\end{remark}

\begin{theorem}
\label{thm:weighted-monotonicity}
Suppose that either
\begin{enumerate}
\item $M = \mathbb{R}^n$, $i\in\{0,1\}$, $j\in\{0,1\}$;
\item $M=\mathbb{H}^n$, $i\in \{0,1\}$, $j=1$;
\item $M=\mathbb{S}^n$, $i=0$, $j=1$. 
\end{enumerate}
 Further suppose that $R<\frac{1}{2}\diam(M),y$ are such that $F'(s)\geq 0$ for $|s|\leq R$. 

Let $\Sigma$ be a $k$-dimensional minimal submanifold in $B^n_R$ with $\partial \Sigma \subset \partial B^n_R$, and let $E_t$ be as above. Let $\Sigma_0$ be a totally geodesic disk orthogonal to the geodesic $\gamma$ containing $o$ and $y$.

Then the quantity \begin{equation}\label{eq:monotone-quantity}Q_{i,j}(t):=\frac{\int_{\Sigma\cap E_t} w_{t,i,j} }{|\Sigma_0 \cap E_t|} = \frac{1}{A(\underline{r}(y))|\mathbb{S}^{k-1}|}  \frac{\int_{\Sigma\cap E_t} w_{t,i,j} }{t}\end{equation} is monotone increasing for $t\in [0,1]$, and is constant if and only if $\Sigma$ is a totally geodesic disk orthogonal to the geodesic $\gamma$. 
\end{theorem}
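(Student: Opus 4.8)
The strategy is to verify the hypotheses of Lemma~\ref{lem:weighted-monotonicity} with $f=\frac{A(r_y)}{A(u_s)}$, $w_t=w_{t,i,j}$, and a suitable choice of vector field $W_t=W_{t,i,j}$, and then to identify $\int_{\Sigma_0\cap E_t}1$ with $t\,A(\underline r(y))|\mathbb S^{k-1}|$. First I would construct the vector field. Guided by the fixed-centre case, where $W_1=\frac{A(r)}{A'(r)}\nabla r$ and $W_0=\frac{1}{A'(r)}\nabla r$ did the job, and by the vector field used in \cite{NZ} for the prescribed-point problem, the natural candidate is
\[
W_{t,i,j}=\frac{A(r_y)}{A'(r_y)}\nabla r_y + (i-1)\,\frac{A(r_y)}{A'(r_y)}\nabla^{\perp}r_y\big|_{\text{absorbed}} + j\,c_t(s)\,\partial_s,
\]
more precisely a combination of $\frac{A(r_y)}{A'(r_y)}\nabla r_y$ (respectively, when $i=0$, a rescaled version using $A'$ in place of $A$ in one component) together with a multiple of the Killing field $\partial_s=\cs(\rho)^2\nabla s$ whose coefficient $c_t(s)$ is chosen precisely so that the divergence condition (2) holds with equality for totally geodesic disks. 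The key computational input is Proposition~\ref{divergences} for the $\nabla r_y$-part, together with the fact that $\partial_s$ is Killing, so $\div_\Sigma(c_t(s)\partial_s)=c_t'(s)|\nabla^\top s|^2\cs(\rho)^2$ plus lower-order terms (since $g(\nabla_X\partial_s,X)=0$). This is where the definition of $\tilde F_t$ comes from: the substitution $r\mapsto A^{-1}(tA(u_s))$ appearing in $\tilde F_t$ is exactly the reparametrisation needed so that the radial divergence identity, evaluated along the level set $\{f=t\}$ where $A(r_y)=tA(u_s)$, matches the weight $w_{t,i,j}$.

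The second step is the boundary condition (1). On $\Sigma\cap\{f=t\}$ one computes $\frac{\nabla^\top f}{f}=\frac{A'(r_y)}{A(r_y)}\nabla^\top r_y-\frac{A'(u_s)u_s'}{A(u_s)}\nabla^\top s$, and the pairing $\langle W_{t,i,j},\frac{\nabla^\top f}{f}\rangle$ should, after using $A(r_y)=tA(u_s)$ on this set, reduce \emph{exactly} to $w_{t,i,j}$ — i.e. condition (1) holds with equality, which is forced if we want the monotone quantity to be constant on totally geodesic disks. Checking this amounts to a careful bookkeeping of the $\langle\nabla r_y,\nabla s\rangle$ cross terms and the identity $\cs(\rho)^2|\nabla s|^2 = $ (the appropriate metric factor from \eqref{eq:g-s-rho}); here the Pythagorean relation \eqref{eq:pythag-0} and the explicit formula for $u(s)$ in Definition~\ref{eq:def-u} are the tools. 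The third step is the divergence inequality (2) on the interior $\Sigma\cap E_t$: expanding $\div_\Sigma W_{t,i,j}-(w_{t,i,j}-t\,\partial_t w_{t,i,j})$ and using Proposition~\ref{divergences} plus the Killing property, the radial terms cancel by the same reparametrisation identity, and what remains is a multiple of $(a(r_y)-1)$ (controlled in sign by the curvature $\kappa$, cf.\ the discussion after Proposition~\ref{divergences}) plus a term whose sign is governed by $F'(s)\ge 0$, equivalently $k\cs(u(s))^2\ge 2$ by Remark~\ref{rmk:F'}. The case split in the theorem statement ($i,j$ free for $\mathbb R^n$; $j=1$ for $\mathbb H^n$; $i=0,j=1$ for $\mathbb S^n$) is precisely dictated by which of these leftover terms have a favourable sign: in $\mathbb S^n$ one needs $i=0$ to kill the $\nabla^\perp r_y$ contribution since $a(r_y)\le 1$ there, and $j=1$ so that the $F'$-term can compensate.

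Once (1) and (2) are established, Lemma~\ref{lem:weighted-monotonicity} gives monotonicity of $\frac{1}{t}\int_{\Sigma\cap E_t}w_{t,i,j}$. To get the stated normalisation, I would compute $|\Sigma_0\cap E_t|$ directly: on a totally geodesic disk $\Sigma_0\perp\gamma$ we have $s\equiv$ const, $|\nabla^\top s|=0$ (so the $j$-term drops), $|\nabla^\top r_y|=1$ away from the foot point, and $E_t$ restricted to $\Sigma_0$ is a metric disk whose radius, by unwinding $f=\frac{A(r_y)}{A(u_s)}\le t$ with $u_s=u(s)$ fixed and using \eqref{eq:def-u}, works out to $A^{-1}(tA(u(s)))$; integrating gives $|\Sigma_0\cap E_t|=A\big(A^{-1}(tA(u(s)))\big)|\mathbb S^{k-1}|=tA(u(s))|\mathbb S^{k-1}|$, and choosing $\Sigma_0$ through $o$ (where $s=0$, $u(0)=\underline r(y)$... or rather the relevant value; in any case $A(u(s))$ is constant along the foliation parameter of a fixed disk) yields $tA(\underline r(y))|\mathbb S^{k-1}|$, matching \eqref{eq:monotone-quantity}. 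Finally, for the rigidity statement: equality in the monotonicity forces equality in both (1) and (2) for all $t$; tracing back, equality in the divergence step forces $|\nabla^\perp r_y|^2(a(r_y)-1)=0$ and the $F'$-term to vanish, and more importantly forces the second fundamental form terms hidden in $\div_\Sigma(\nabla r_y)^\top$ versus $\div_\Sigma\nabla r_y$ to agree, i.e.\ $\Sigma$ is totally geodesic; equality in (1) together with $E_1=B^n_R$ then pins down the orientation, forcing $\Sigma$ orthogonal to $\gamma$.

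\textbf{Main obstacle.} The hard part will be the divergence inequality (2): it requires simultaneously controlling the curvature-sign term $(a(r_y)-1)$, the Killing-field contribution from $\partial_s$, and the $t$-derivative $t\,\partial_t w_{t,i,j}$, and verifying that the messy reparametrisation built into $\tilde F_t$ makes all the radial cross-terms cancel. Getting the algebra to collapse to a manifestly signed expression — and seeing exactly why the three cases $(i,j)$ are the only ones that work — is the technical heart of the argument. A secondary subtlety is checking that the family $G_t=-j\tilde F_t(s)\frac{A'(u_s)u_s'}{A(u_s)}\cs(\rho)^2$ is nonnegative and continuous with $G_0=0$ (using $u_s'\le 0$, $F\le 0$, $\tilde F_t\le 0$), which is needed to match the clean statement of Theorem~\ref{thm:main}.
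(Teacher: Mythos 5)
Your plan is essentially the paper's: apply Lemma~\ref{lem:weighted-monotonicity} with $f=\frac{A(r_y)}{A(u_s)}$ and a vector field built from $\frac{A(r_y)}{A'(r_y)}\nabla r_y$ plus a multiple of the Killing field $\partial_s=\cs(\rho)^2\nabla s$; use the constraint $A(r_y)=tA(u_s)$ on $\{f=t\}$ to make the boundary cross-terms cancel; reduce the divergence condition to the sign of $F'(s)$ together with curvature-controlled terms in $a(r_y)$; and compute $|\Sigma_0\cap E_t|=tA(\underline r(y))|\mathbb S^{k-1}|$ by noting $u_s\equiv\underline r(y)$ on $\Sigma_0$. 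The reduction to $\frac{F'(s)}{F(s)}\leq 0$ and the role of Remark~\ref{rmk:F'} are correctly anticipated.

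Two details in your write-up would need correcting before execution. First, the paper uses a single vector field $W_t=\frac{A(r_y)}{A'(r_y)}\nabla r_y+\tilde F_t(s)\partial_s$ for all $(i,j)$; the $i,j$-dependence enters only through the weight $w_{t,i,j}$, not through $W_t$. Your ansatz with the $(i-1)\,\frac{A(r_y)}{A'(r_y)}\nabla^\perp r_y\big|_{\text{absorbed}}$ piece does not define a vector field (you cannot project perpendicular to $\Sigma$ inside an ambient $W_t$ before computing $\div_\Sigma$), and is in any case unnecessary. Second, your claim that condition~(1) holds with equality on $\{f=t\}$ is false except when $(i,j)=(0,1)$: the actual computation gives
\[
w_{t,i,j}-\Big\langle W_t,\tfrac{\nabla^\top f}{f}\Big\rangle = i\,|\nabla^\perp r_y|^2+(1-j)\,\tilde F_t(s)\,\frac{A'(u_s)u_s'}{A(u_s)}\cs(\rho)^2|\nabla^\top s|^2\geq 0,
\]
which is a genuine inequality for $(i,j)\in\{(1,1),(1,0),(0,0)\}$. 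Only nonnegativity is needed in Lemma~\ref{lem:weighted-monotonicity}, so this doesn't derail the argument, but insisting on equality would send you down a wrong path when designing the vector field. You also omit the $(\kappa,j)=(0,0)$ case, where one checks directly that $\tilde F_t$ is constant in $s$ so that $\partial_s\tilde F_t\equiv0$; this is a small but separate verification in the paper.
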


\begin{remark}
When $y=0$, this is equivalent to the classical monotonicity Theorem \ref{thm:classic-monotonicity}. When $M=\mathbb{R}^n$, the monotonicity of $Q_{1,0}$ is equivalent to the moving-centre monotonicity formula proven in \cite{Zhu18}. The other monotonicity formulae are new, even in the Euclidean setting. We remark though that the sets $E_t$ are the same family of sets used in \cite{Zhu18}. 
\end{remark}

\begin{remark}
Each monotone quantity can be used to obtain an excess-type estimate for the area by integrating the derivative. It follows from the proof below that 
\begin{align}\label{eq:excess}
Q_{i,j}'(t)&=  \frac{1}{t} \int_{\Sigma \cap \pr E_t} \frac{1}{|\nabla^\top f|}\left(  i|\nabla^\perp r_y|^2 +(1- j)\tilde{F}_t(s)  \frac{A'(u_s)u'_s}{A(u_s)}  \cs(\rho)^2 |\nabla^\top s|^2\right)  \\
\nonumber &\quad + \frac{1}{t^2}\left(\int_{\Sigma\cap E_t}  (a(r_y)-i)|\nabla^\perp r_y|^2 + j \tilde{F}_t(s)\frac{F'(s)}{F(s)}\cs(\rho)^2 |\nabla^\top s|^2 + (1-j)\pr_s \tilde{F}_t(s) \cs(\rho)^2 |\nabla^\top s|^2\right).
\end{align}
\end{remark}

\begin{proof}
Note that on $\Sigma_0$, we have $u_s \equiv \underline{r}(y)$, so $f|_{\Sigma_0} = \frac{A(r_y)}{A(\underline{r}(y))}$. It follows that $\Sigma_0 \cap \{f\leq t\}$ is precisely a totally geodesic disk with area $tA(\underline{r}(y)) |\mathbb{S}^{k-1}|$. This establishes the equality (\ref{eq:monotone-quantity}). 

The strategy is now to apply Lemma \ref{lem:weighted-monotonicity} with the vector field \[W_t = \frac{A(r_y)}{A'(r_y)} \nabla r_y + \tilde{F}_t(s) \pr_s.\] 
We need only verify conditions (1) and (2) of that lemma:

For (1), note that \[\frac{\nabla f}{f} = \frac{A'(r_y)}{A(r_y)}\nabla r_y - \frac{A'(u_s)}{A(u_s)} u'_s \nabla s.\]
Therefore 
\[
\begin{split}
w_{t,i,j} - \langle W_t, \frac{\nabla^\top f}{f}\rangle = w_{t,i,j} &- |\nabla^\top r_y|^2  - \frac{A'(r_y)}{A(r_y)}\tilde{F}_t(s) \langle \nabla^\top r_y, \pr_s\rangle \\
&+\frac{A(r_y)}{A'(r_y)} \frac{A'(u_s)u'_s}{A(u_s)} \langle \nabla^\top r_y ,\nabla^\top s\rangle  + \tilde{F}_t(s)  \frac{A'(u_s)u'_s}{A(u_s)}  \cs(\rho)^2 |\nabla^\top s|^2
\end{split}
\]
When $f= \frac{A(r_y)}{A(u_s)}=t$, we have $\tilde{F}_t(s)= \frac{A(r_y)^2}{A(u_s)} \frac{A'(u_s) u'_s \cs(s-s_y)^2}{A'(r_y)^2\cs(r_y)^2}$, which means the middle terms precisely cancel. As $\tilde{F}_t(s)\leq 0$, $u'_s\leq 0$, it follows that on $\{f=t\}$, we have
\[ w_{t,i,j} - \langle W_t, \frac{\nabla^\top f}{f}\rangle= i|\nabla^\perp r_y|^2 +(1- j)\tilde{F}_t(s)  \frac{A'(u_s)u'_s}{A(u_s)}  \cs(\rho)^2 |\nabla^\top s|^2  \geq0\]

For (2), we calculate
\begin{equation}
\div_\Sigma W_t = |\nabla^\top r_y|^2 + a(r_y) |\nabla^\perp r_y|^2 + \pr_s \tilde{F}_t(s) \cs(\rho)^2 |\nabla^\top s|^2. 
\end{equation}
On the other hand, \[w_{t,i,j} - t \pr_t w_{t,i,j} = |\nabla^\top r_y|^2 + i |\nabla^\perp r_y|^2 - j (\tilde{F}_t(s)-t\pr_t \tilde{F}_t(s)) \frac{A'(u_s)u'_s}{A(u_s)} \cs(\rho)^2 |\nabla^\top s|^2.\]

When $\kappa \leq 0$, we have $a(r_y)\geq 1\geq i$. When $\kappa =1$, we still have $a(r_y)\geq 0$, and $i=0$ by assumption. In either case, we have $|\nabla^\top r_y|^2 + a(r_y) |\nabla^\perp r_y|^2 \geq |\nabla^\top r_y|^2 + i |\nabla^\perp r_y|^2$, and it remains to verify that 
\[
\pr_s \tilde{F}_t(s) + j(\tilde{F}_t(s)-t\pr_t \tilde{F}_t(s)) \frac{A'(u_s)u'_s}{A(u_s)} \geq 0,
\]
or equivalently 
\begin{equation}
\label{eq:div-condition}
\frac{\pr_s \tilde{F}_t(s)}{\tilde{F}_t(s)} + j\left(1-t \frac{\pr_t \tilde{F}_t(s)}{\tilde{F}_t(s)}\right) \frac{A'(u_s)u'_s}{A(u_s)} \leq 0.
\end{equation}

Henceforth, for convenience, we will write $u=u(s)$. We calculate 
\[\pr_s A^{-1}(tA(u)) =  \frac{tA'(u(s))u'(s)}{A'(A^{-1}(tA(u)))} ,\]

\[\pr_t A^{-1}(tA(u)) = \frac{A(u(s))}{A'(A^{-1}(tA(u)))}.\] 
Then \[ \frac{\pr_s \tilde{F}_t(s)}{\tilde{F}_t(s)} = \frac{F'(s)}{F(s)} + \frac{A'(u) u'}{A(u)} - 2 \frac{A''(A^{-1}(tA(u)))}{A'(A^{-1}(tA(u)))^2} tA'(u)u' + 2\kappa \frac{\tn(A^{-1}(tA(u)))}{A'(A^{-1}(tA(u)))}tA'(u)u',\] 

\[\frac{\pr_t \tilde{F}_t(s)}{\tilde{F}_t(s)} = \frac{2}{t} - 2 \frac{A''(A^{-1}(tA(u)))}{A'(A^{-1}(tA(u)))^2} A(u) + 2\kappa \frac{\tn(A^{-1}(tA(u)))}{A'(A^{-1}(tA(u)))}A(u).\]
It follows that, for any $\kappa$, we have 
\[\frac{\pr_s \tilde{F}_t(s)}{\tilde{F}_t(s)} + \left(1-t \frac{\pr_t \tilde{F}_t(s)}{\tilde{F}_t(s)}\right) \frac{A'(u_s)u'_s}{A(u_s)} = \frac{F'(s)}{F(s)} \leq0.\]

This establishes (\ref{eq:div-condition}) whenever $j=1$. In the remaining case, we have $\kappa=0, j=0$. Then $A^{-1}(tA(u))= t^\frac{1}{k}u$, and $\tilde{F}_t(s) = t^2 \frac{u^k}{k t^\frac{2(k-1)}{k} u^{2(k-1)}} u^{k-1} u' = - \frac{s_y}{k} t^\frac{2}{k} $. In particular $\pr_s \tilde{F}_t(s)=0$, which again establishes (\ref{eq:div-condition}). 

The monotonicity now follows from Lemma \ref{lem:weighted-monotonicity}. Finally, if $\frac{1}{t} \int_{\Sigma\cap E_t} w_{t,i,j}$ is constant, then we must have equality in all the inequalities above. It is clear that this may only occur if $\nabla^\perp r_y=0$ and (if $y\neq o$) $\nabla^\top s=0$ on $\Sigma$, which implies that $\Sigma$ is a totally geodesic disk orthogonal to $\gamma$. 
\end{proof}

\begin{corollary}
\label{cor:area-estimate}
Let $M\in \{\mathbb{H}^n, \mathbb{R}^n, \mathbb{S}^n\}$ and suppose that $R,y$ are such that $F'(s)\geq 0$ for $|s|\leq R$. Let $\Sigma$ be a $k$-dimensional minimal submanifold in $B^n_R$ with $\partial \Sigma \subset \partial B^n_R$, and $y\in \Sigma$. 

Then $|\Sigma| \geq |B^k_{\underline{r}(y)}|$, with equality if and only if $\Sigma$ is a totally geodesic disk orthogonal to $y$. 
\end{corollary}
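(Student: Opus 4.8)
The plan is to deduce the area bound from the monotonicity of the quantity $Q_{0,1}(t)$ in Theorem \ref{thm:weighted-monotonicity}, exactly as Corollary \ref{classic-monotonicity} was deduced from the classical monotonicity. The key point is that $(i,j)=(0,1)$ is admissible for all three space forms, so this single monotone quantity covers every case. First I would record two consequences of the hypotheses. By Remark \ref{rmk:F'}, the condition $F'(s)\geq 0$ for $|s|\leq R$ means $k\cs(u(s))^2\geq 2$ there, hence $\cs(u(s))>0$ for $|s|\leq R$; since $u(-R)=R+s_y$, this gives $\cs(R+s_y)>0$ and therefore $E_1=B^n_R$. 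Also, as established in the proof of Theorem \ref{thm:weighted-monotonicity}, $\Sigma_0\cap E_t$ is a totally geodesic disk of area $tA(\underline{r}(y))|\mathbb{S}^{k-1}|$; at $t=1$ this equals $|B^k_{\underline{r}(y)}|$.

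Next I would bound $Q_{0,1}(1)$ from above. Since $\tilde{F}_1(s)\leq 0$, $u_s'\leq 0$, $A'(u_s),A(u_s)>0$, and $\cs(\rho)^2|\nabla^\top s|^2\geq 0$, the correction term in $w_{1,0,1}$ is nonpositive, so $w_{1,0,1}\leq|\nabla^\top r_y|^2\leq 1$ pointwise on $\Sigma$. As $\Sigma\subset E_1=B^n_R$, this gives $\int_{\Sigma\cap E_1}w_{1,0,1}\leq|\Sigma|$, whence
\[
\frac{|\Sigma|}{|B^k_{\underline{r}(y)}|}\;\geq\;Q_{0,1}(1)\;\geq\;Q_{0,1}(0):=\lim_{t\to 0^+}Q_{0,1}(t),
\]
the second inequality being the monotonicity on $[0,1]$.

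It then remains to show $Q_{0,1}(0)\geq 1$, and I expect this to be the main obstacle. The idea is that $Q_{0,1}(0)$ equals the Euclidean density $\Theta(\Sigma,y)$, which is $\geq 1$ because $\Sigma$ is smooth and passes through $y$ — mirroring the identification $Q_I(0)=\Theta(\Sigma,o)$ in the origin case. The delicate part is that one must simultaneously control, as $t\to 0$: (i) the sets $E_t$ collapse to $y$ (on $E_t$ one has $A(r_y)\leq t\sup_{|s|\leq R}A(u(s))$), and since $u_s\to\underline{r}(y)$ near $y$ they are, to leading order, the geodesic balls $B_{\varrho(t)}(y)$ with $A(\varrho(t))=tA(\underline{r}(y))$; (ii) $\tilde{F}_t(s)\to 0$ uniformly on $|s|\leq R$, which follows from (\ref{asymptotics-1}) applied to $A^{-1}(tA(u(s)))\to 0$; and (iii) consequently $w_{t,0,1}\to 1$ uniformly on $\Sigma\cap E_t$, using also the smoothness of $\Sigma$ at $y$ so that $|\nabla^\top r_y|\to 1$ there. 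Combining these with the asymptotics (\ref{asymptotics-1}) yields
\[
Q_{0,1}(0)=\lim_{t\to 0^+}\frac{\int_{\Sigma\cap E_t}w_{t,0,1}}{tA(\underline{r}(y))|\mathbb{S}^{k-1}|}=\lim_{t\to 0^+}\frac{|\Sigma\cap B_{\varrho(t)}(y)|}{|B^k_{\varrho(t)}|}=\Theta(\Sigma,y)\geq 1,
\]
and together with the previous display this gives $|\Sigma|\geq|B^k_{\underline{r}(y)}|$.

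Finally, for the rigidity statement: if $|\Sigma|=|B^k_{\underline{r}(y)}|$ then every inequality above is an equality, so $Q_{0,1}(0)=Q_{0,1}(1)$; being monotone increasing on $[0,1]$, $Q_{0,1}$ is constant there, and the equality case of Theorem \ref{thm:weighted-monotonicity} forces $\Sigma$ to be a totally geodesic disk orthogonal to $\gamma$. Conversely, a totally geodesic $k$-disk orthogonal to $\gamma$ that contains $y\in\gamma$ and has boundary on $\partial B^n_R$ is necessarily the slice $\{s=s_y\}\cap B^n_R$, and by the Pythagorean identity (\ref{eq:pythag-0}) this has radius $\cs^{-1}(\cs(R)/\cs(s_y))=\underline{r}(y)$, hence area exactly $|B^k_{\underline{r}(y)}|$.
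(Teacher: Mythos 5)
Your proposal is correct and follows essentially the same route as the paper: apply Theorem~\ref{thm:weighted-monotonicity}, bound $Q$ at $t=1$ from above using $w_{t,i,j}\leq 1$ and $E_1=B^n_R$, and identify the $t\to 0$ limit as the density $\Theta(\Sigma,y)\geq 1$. You specialize to $(i,j)=(0,1)$ and spell out a few details the paper leaves implicit (the continuity argument giving $\cs(R+s_y)>0$ from $F'\geq 0$, and the converse direction of the rigidity), but the strategy is identical.
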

\begin{proof}
Theorem \ref{thm:weighted-monotonicity} gives that \[ \frac{\int_\Sigma w_{1,i,j}}{|\Sigma_0\cap E_1|}  \geq \frac{1}{A(\underline{r}(y))|\mathbb{S}^{k-1}|} \lim_{t\to0} \frac{\int_{\Sigma\cap E_t} w_{t,i,j} }{t}\] for some suitable $i,j$. 
Now we always have $\tilde{F}_t(s) \leq 0$ and $u_s' \leq 0$, hence it is clear from \eqref{eq:weight} that \[w_{t,i,j} \leq 1.\] Moreover, $E_1 = B^n_R$, so $|\Sigma_0\cap E_1| = |B^k_{\underline{r}(y)}|.$ 
On the other hand, since $u_s$ is bounded away from zero on $B^n_R$, we see that as $f\to0$, so too does $r_y\to 0$. Then as $u_s(y)= \underline{r}(y)$, we find that \[f \sim \frac{A(r_y)}{A(\underline{r}(y))}.\]

For small $t$, it follows that $E_t$ approximates a ball of radius $A^{-1}(tA(\underline{r}(y)))$, and hence 
 $\Sigma\cap \{f\leq t\}$ approximates a disk of area $t A(\underline{r}(y))|\mathbb{S}^{k-1}|$. Moreover, as $t\to 0$, we have \[\tilde{F}_t \sim t^\frac{2}{k} \frac{A(\underline{r}(y))}{ \underline{r}(y)^{2(k-1)}} F(s_y) \to0,  \] and $\nabla^\top r_y \to \nabla r_y$. In particular, $w_{t,i,j} \to 1$, and \[ \frac{1}{A(\underline{r}(y))|\mathbb{S}^{k-1}|} \lim_{t\to0} \frac{\int_{\Sigma\cap E_t} w_{t,i,j} }{t} =  \Theta(\Sigma, y)\geq 1.\] This completes the proof. 

\end{proof}

\bibliographystyle{amsalpha}
\bibliography{mc-monotonicity-space-forms}

\providecommand{\bysame}{\leavevmode\hbox to3em{\hrulefill}\thinspace}
\providecommand{\MR}{\relax\ifhmode\unskip\space\fi MR }
\providecommand{\MRhref}[2]{%
  \href{http://www.ams.org/mathscinet-getitem?mr=#1}{#2}
}
\providecommand{\href}[2]{#2}
\begin{thebibliography}{And82}

\bibitem[And82]{And82}
Michael~T. Anderson, \emph{Complete minimal varieties in hyperbolic space},
  Invent. Math. \textbf{69} (1982), no.~3, 477--494. \MR{679768}

\bibitem[BH17]{BH17}
Simon Brendle and Pei-Ken Hung, \emph{Area bounds for minimal surfaces that
  pass through a prescribed point in a ball}, Geom. Funct. Anal. \textbf{27}
  (2017), no.~2, 235--239. \MR{3626612}

\bibitem[CG92]{CG92}
Jaigyoung Choe and Robert Gulliver, \emph{The sharp isoperimetric inequality
  for minimal surfaces with radially connected boundary in hyperbolic space},
  Invent. Math. \textbf{109} (1992), 495--503.

\bibitem[GS87]{GS87}
Robert Gulliver and Peter Scott, \emph{Least area surfaces can have excess
  triple points}, Topology \textbf{26} (1987), no.~3, 345--359. \MR{899054}

\bibitem[NZ22]{NZ}
Keaton Naff and Jonathan~J Zhu, \emph{The prescribed point area estimate for
  minimal submanifolds in constant curvature}, arXiv preprint arXiv:2206.08302
  (2022).

\bibitem[Sim83]{Si83}
Leon Simon, \emph{Lectures on geometric measure theory}, Proceedings of the
  Centre for Mathematical Analysis, Australian National University, vol.~3,
  Australian National University, Centre for Mathematical Analysis, Canberra,
  1983. \MR{756417}

\bibitem[Zhu18]{Zhu18}
Jonathan~J. Zhu, \emph{Moving-centre monotonicity formulae for minimal
  submanifolds and related equations}, J. Funct. Anal. \textbf{274} (2018),
  no.~5, 1530--1552. \MR{3778682}

\end{thebibliography}

\end{document}